\theoremstyle{plain}
\newtheorem{theorem}{Theorem}[section]
\newtheorem{lemma}[theorem]{Lemma}
\theoremstyle{definition}
\newtheorem{definition}[theorem]{Definition}
\newtheorem{example}[theorem]{Example}
\newcommand{\EE}{\mathbb{E}}
\newcommand{\ee}{\mathrm{e}}
\newcommand{\ff}[1]{{\underline{#1}}}
\newcommand{\ii}{\mathrm{i}}
\newcommand{\Po}{\operatorname{Po}}
\newcommand{\Bin}{\operatorname{Bin}}
\newcommand{\Disp}{\operatorname{Disp}}
\newcommand{\Var}{\operatorname{Var}}
\newcommand{\NN}{\operatorname{N}}
\newcommand{\Herm}{\operatorname{Herm}}
\newcommand{\Sib}{\operatorname{Sib}}
\newcommand{\DSib}{\operatorname{DSib}}
\newcommand{\PoSib}{\operatorname{PoSib}}
\newcommand{\PoDSib}{\operatorname{PoDSib}}
\newcommand{\eqd}{\equiv}
\begin{document}

\title{Discrete broadly stable distributions}
\author{Matthew Aldridge}
\date{12 September 2025}

\maketitle

\begin{abstract}
A random variable $X$ is strictly stable if a sum of independent copies of $X$ has the same distribution as $X$ up to scaling, and is stable (in the broad sense) if the sum has the same distribution as $X$ up to both scaling and shifting. Steutel and Van Harn studied `discrete strict stability' for random variables taking values in the non-negative integers, replacing scaling by the thinning operation. We extend this to `discrete stability' (in the broad sense), where shifting is replaced by the addition of an independent Poisson random variable. The discrete stable distributions are `Poisson--delayed Sibuya' distributions, which include the Poisson and Hermite distributions. Similar results have been proved independently by Townes \cite{townes}.
\end{abstract}

\tableofcontents

\section{Stability and discrete stability}

\subsection{Definitions}

We start by recalling the definitions of strictly stable and stable real-valued random variables, before looking at their discrete equivalents. The concept of stability goes back to the work of Paul Lévy in the 1920s; see, for example, \cite[Chapters VI and XVII]{feller2}, \cite[Chapters 1 and 3]{nolan} 
for more detailed coverage than our brief summary here.

A random variable $X$ is strictly stable if a sum of independent copies of $X$ has the same distribution as $X$ up to scaling, and is stable (in the broad sense) if the sum has the same distribution as $X$ up to both scaling and shifting.

Throughout, we write $\eqd$ for equality in distribution.

\begin{definition} \label{def:rstab}
Consider a real-valued random variable $X$, and let $X_1, X_2 \dots, X_n$ be IID copies of $X$.

We say that $X$ is \emph{strictly stable} if for all $n$ there exists a constant $a_n \geq 0$ such that
\begin{equation} \label{eq:rstrict}
a_n (X_1 + X_2 + \cdots + X_n) \eqd X .
\end{equation}

We say that $X$ is \emph{stable} (in the \emph{broad} sense) if for all $n$ there exist constants $a_n \geq 0$ and $b_n$ such that
\begin{equation} \label{eq:rstab}
a_n (X_1 + X_2 + \cdots + X_n) \eqd X  + b_n .
\end{equation}
\end{definition}

However, none of the strictly stable or stable distributions are discrete distributions, with the exception of the degenerate point masses. So to get interesting results for discrete random variables we need to consider different definitions. In this paper, we consider definitions of `discrete stability' for discrete random variables, and specifically for `count' random variables that take values in the non-negative integers.

A discrete version of strict stability was studied by Steutel and Van Harn \cite{steutel1}, \cite[Section V.5]{steutel2}.
To get a discrete version, Steutel and Van Harn replaced the scaling operation of multiplying a real-valued $X$ by a scalar $a \geq 0$ to get $aX$ with the thinning operation of thinning a non-negative integer-valued $X$ by a probability $a \in [0,1]$ to get $a \circ X$. Recall that \emph{thinning} can be thought of as each of $X$ items being kept independently with probability $a$; more formally, the conditional distribution of $a \circ X$ given $X$ is Binomial$(X,a)$. We give more details about thinning in Subsection \ref{sec:shift}.

\begin{definition}[\cite{steutel1}] \label{def:dstrict}
Consider a discrete random variable $X$ taking values in the non-negative integers, and let $X_1, X_2 \dots, X_n$ be IID copies of $X$.

We say that $X$ is \emph{discrete strictly stable} if for all $n$ there exists a constant $a_n \in [0,1]$ such that
\begin{equation} \label{eq:dstrict}
a_n \circ (X_1 + X_2 + \cdots + X_n) \eqd X .
\end{equation}
\end{definition}

Steutel and Van Harn called this simply `discrete stable', but it's clear from comparing the definitions in \eqref{eq:rstrict} and \eqref{eq:dstrict} that `discrete \emph{strictly} stable' is more appropriate.

The goal of this paper is to define and study a discrete equivalent of stability in the broad sense.
To do this, we need a discrete equivalent of shifting a real-valued random variable $X$ by a constant $b \in \mathbb R$ to get $X + b$. We propose that a suitable equivalent is `Poisson shifting'.  For $b \geq 0$, we can `Poisson shift' a discrete random variable $X$ by adding an independent Poisson random variable with mean $b$. We will denote this $X \oplus b = X + Z$, where $Z \sim \Po(b)$ is independent of $X$. 
We will give more details about the Poisson shift -- including arguments in favour of the Poisson shift as the natural `discrete shift' -- later in Subsection \ref{sec:shift}.

We can now state our proposed new definition of discrete stability (in the broad sense).

\begin{definition} \label{def:dstab}
Consider a discrete random variable $X$ taking values in the non-negative integers, and let $X_1, X_2 \dots, X_n$ be IID copies of $X$.

We say that $X$ is \emph{discrete stable} (in the \emph{broad} sense) if for all $n$ there exist constants $a_n \in [0,1]$  and $b_n > 0$ such that
\begin{equation} \label{eq:dstab}
a_n \circ (X_1 + X_2 + \cdots + X_n) \eqd X \oplus b_n  .
\end{equation}
\end{definition}

\subsection{Theorems}

We now go over four theorems -- three old; one new here -- that classify the stable distributions for the four definitions of stability above.



We recall the following two foundational results on usual non-discrete stability. With the exception of some special cases, probability density functions for stable distributions cannot be written down in closed form, but the characteristic functions $\phi_X(t) = \EE \exp(\ii tX)$ can be. For full details (especially the characterisations in terms of characteristic functions, which we omit), see, for example, \cite[Chapters VI and XVII]{feller2}, \cite[Chapters 1 and 3]{nolan}.

\begin{theorem} \label{th:rstrict}
If $X$ is strictly stable, in that \eqref{eq:rstrict} holds, and $X$ is not a point mass at $0$, then $a_n = n^{-1/\alpha}$ for some $\alpha \in (0,2]$.

The strictly stable random variables can be fully characterised in terms of their characteristic functions $\phi_X$.
\end{theorem}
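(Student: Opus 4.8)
The plan is to translate \eqref{eq:rstrict} into a functional equation for the characteristic function $\phi = \phi_X$ --- namely $\phi(a_n t)^n = \phi(t)$ for all $n$ and all $t\in\mathbb{R}$ --- and to read the form of $(a_n)$ off from it. First I would record the cheap facts: $a_n > 0$ for every $n$ (if $a_n = 0$ then $X \eqd 0$, which is excluded) and $a_1 = 1$; and if $X$ is degenerate, hence a point mass at some $c \neq 0$, then \eqref{eq:rstrict} says $a_n n c = c$, giving $a_n = n^{-1}$ and $\alpha = 1$. So assume from now on that $X$ is non-degenerate. Then $\phi$ is continuous with $\phi(0) = 1$, hence nonzero near $0$, and $|\phi| \not\equiv 1$ on any neighbourhood of $0$ --- otherwise the symmetrisation $X_1 - X_2$, with characteristic function $|\phi|^2$, would vanish identically there, forcing $X$ degenerate. (Structurally, \eqref{eq:rstrict} also exhibits $X$ as a sum of $n$ IID variables for every $n$, so $X$ is infinitely divisible and $\phi$ is zero-free on all of $\mathbb{R}$; only the local statement is needed below.)

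Next I would determine the size and monotonicity of $(a_n)$. Comparing $n$ with $n+1$ summands: with $Y = X_1 + \dots + X_n$, \eqref{eq:rstrict} gives $Y + X_{n+1} \eqd (a_n/a_{n+1})\,Y$, so $\phi(t)^{n+1} = \phi(ct)^n$ with $c = a_n/a_{n+1}$, and taking moduli $|\phi(ct)| = |\phi(t)|^{(n+1)/n} \leq |\phi(t)|$ for all $t$. If $c \leq 1$ this is iterated towards $0$ to conclude $|\phi| \equiv 1$, contradicting non-degeneracy; hence $c > 1$, so $(a_n)$ is strictly decreasing and $a_n < 1$ for $n \geq 2$. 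Applying \eqref{eq:rstrict} with $mn$ summands grouped two ways gives $\phi(a_m a_n t)^{mn} = \phi(t) = \phi(a_{mn} t)^{mn}$; taking $mn$th roots near $0$ (both sides are near $1$ there) and iterating the resulting identity yields the multiplicativity $a_{mn} = a_m a_n$.

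Thus $\gamma(n) := -\log a_n$ is a strictly increasing, completely additive function on the positive integers with $\gamma(1) = 0$. The usual squeeze --- for $m, n \geq 2$ pick $j = j(k)$ with $n^j \leq m^k < n^{j+1}$, use monotonicity and additivity, and let $k \to \infty$ --- gives $\gamma(n) = \kappa \log n$ for a constant $\kappa > 0$, i.e.\ $a_n = n^{-\kappa}$; set $\alpha = 1/\kappa$, so $a_n = n^{-1/\alpha}$ with $\alpha \in (0,\infty)$. To cut $\alpha$ down to $(0,2]$ and to reach the characterisation, I would feed $a_n = n^{-1/\alpha}$ back in. The relations $\phi(a_m t)^m = \phi(t) = \phi(a_n t)^n$ give, for the continuous branch $L = \log\phi$ near $0$, the scaling $L(rt) = r^\alpha L(t)$ for all $r$ in the dense set $\{q^{1/\alpha} : q \in \mathbb{Q}_{>0}\}$, hence for all $r > 0$ by continuity; so $L(t) = c_\pm |t|^\alpha$ with possibly different constants for $t > 0$ and $t < 0$, and the functional equation propagates this to $\phi(t) = \exp(c_\pm |t|^\alpha)$ on all of $\mathbb{R}$. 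Conjugate symmetry gives $c_- = \overline{c_+}$ and boundedness gives $\operatorname{Re} c_+ \leq 0$ (indeed $< 0$, by non-degeneracy); and if $\alpha > 2$ then this $\phi$ is twice differentiable at $0$ with $\phi''(0) = 0$, forcing $\EE[X^2] = -\phi''(0) = 0$ and hence $X \eqd 0$ --- so $\alpha \in (0,2]$. Reparametrising $c_+ = -\lambda(1 - \ii\theta)$ in terms of a scale $\lambda > 0$ and a skewness parameter produces the classical family, and conversely each such $\phi$ satisfies $\phi(n^{-1/\alpha}t)^n = \phi(t)$, so is strictly stable.

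I expect the real obstacle to be the step from ``$(a_n)$ multiplicative'' to ``$a_n = n^{-1/\alpha}$'': a completely additive function on $\mathbb{N}$ need not be a constant multiple of $\log n$ without further regularity, so the genuine content is the monotonicity of $(a_n)$ coming from the $n$-versus-$(n+1)$ comparison. The other subtle point belongs to the last sentence rather than to the displayed formula: the exact range of the skewness parameter for which $\phi$ is genuinely positive-definite (equivalently, the L\'evy--Khintchine form of these laws, with L\'evy density proportional to $|x|^{-1-\alpha}$ on each half-line), which is classical and which I would cite from \cite{feller2} or \cite{nolan} rather than reprove.
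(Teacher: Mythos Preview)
The paper does not prove Theorem~\ref{th:rstrict}: it is stated as classical background, with the reader referred to \cite{feller2} and \cite{nolan} for details. So there is no proof in the paper to compare against; your proposal is effectively a sketch of the standard argument found in those references, and as such it is sound. The functional equation $\phi(a_n t)^n=\phi(t)$, the $n$-versus-$(n{+}1)$ comparison to force strict monotonicity of $(a_n)$, complete multiplicativity from the two groupings of $mn$ summands, the squeeze argument giving $a_n=n^{-1/\alpha}$, the scaling identity $L(rt)=r^\alpha L(t)$ for the log-characteristic function, and the exclusion of $\alpha>2$ via $\phi''(0)=0$ are all the right moves and in the right order.

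The one place the write-up is terse is the passage from $\phi(a_m a_n t)^{mn}=\phi(a_{mn} t)^{mn}$ to $a_{mn}=a_m a_n$: after taking $mn$th roots near the origin you obtain $\phi(a_m a_n t)=\phi(a_{mn} t)$ there, but you still need the (easy) fact that a non-degenerate law cannot coincide with a nontrivial rescaling of itself --- otherwise iterating $(a_{mn}/(a_ma_n))^k X\eqd X$ would force $X\eqd 0$. Your word ``iterating'' suggests you have exactly this in mind; it would be worth making explicit. You are also right to flag, in your final paragraph, that the positive-definiteness constraint on the skewness parameter is the part one would cite rather than reprove.
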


\begin{theorem} \label{th:rstab}
If $X$ is stable, in that \eqref{eq:rstab} holds, and $X$ is not a point mass, then $a_n = n^{-1/\alpha}$ for some $\alpha \in (0,2]$.

The stable random variables can be fully characterised in terms of their characteristic functions $\phi_X$.
\end{theorem}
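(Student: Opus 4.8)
The plan is to pass to characteristic functions and exploit the fact that \eqref{eq:rstab}, holding for every $n$, writes $X$ as an $n$-fold convolution of a scaled, shifted copy of its own law, so that $X$ is infinitely divisible. Writing $\phi=\phi_X$, relation \eqref{eq:rstab} is equivalent to the functional equation
\begin{equation*}
\phi(a_n t)^n = \ee^{\ii b_n t}\,\phi(t)\qquad\text{for all }t\in\mathbb R.
\end{equation*}
A preliminary step is to check that $\phi$ has no zeros: either quote that characteristic functions of infinitely divisible laws never vanish, or argue directly that $|\phi(a_n\cdot)| = |\phi|^{1/n}$ propagates non-vanishing out from a neighbourhood of $0$. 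This allows a continuous logarithm $\psi=\log\phi$ with $\psi(0)=0$, and the functional equation becomes $n\,\psi(a_n t) = \psi(t) + \ii b_n t$.

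First I would pin down the norming constants. Composing the functional equation for $m$ and then for $n$ (grouping $mn$ summands into $n$ blocks of $m$) yields the multiplicativity $a_{mn}=a_m a_n$ together with a cocycle identity for the $b_n$; since $a_n=1$ for some $n>1$ would force $X$ to be a point mass, and since a monotonicity argument (using $\mathrm{Re}\,\psi(t)=\tfrac1n\mathrm{Re}\,\psi(a_n t)\le 0$ and $a_n\to 0$) fixes the direction, one gets $a_n=n^{-1/\alpha}$ for a unique $\alpha>0$. To see $\alpha\le 2$ I would invoke the Lévy--Khintchine representation $\psi(t)=\ii\gamma t-\tfrac12\sigma^2 t^2+\int_{\mathbb R}\bigl(\ee^{\ii tx}-1-\tfrac{\ii tx}{1+x^2}\bigr)\,\nu(\mathrm dx)$: the self-similarity $n\,\psi(n^{-1/\alpha}t)=\psi(t)+\ii b_n t$ forces $n\,a_n^2\sigma^2=\sigma^2$, so either $\sigma=0$ or $\alpha=2$; and it forces $\nu$ to be invariant (modulo the linear compensator) under $x\mapsto a_n x$, hence of the pure power-law form $\nu(\mathrm dx)=\bigl(c_+\mathbf 1_{x>0}+c_-\mathbf 1_{x<0}\bigr)|x|^{-1-\alpha}\,\mathrm dx$. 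The Lévy-measure integrability conditions $\int_{|x|\le 1}x^2\,\nu(\mathrm dx)<\infty$ and $\int_{|x|>1}\nu(\mathrm dx)<\infty$ then force $0<\alpha<2$ in the non-Gaussian case, so $\alpha\in(0,2]$ overall.

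For the characterisation I would substitute this power-law Lévy measure (or the Gaussian part, when $\alpha=2$) back into Lévy--Khintchine and evaluate the resulting integral. For $\alpha\ne 1$ this gives the classical closed form
\begin{equation*}
\phi_X(t)=\exp\!\Bigl(\ii\mu t - c\,|t|^{\alpha}\bigl(1-\ii\beta\,\sgn(t)\tan\tfrac{\pi\alpha}{2}\bigr)\Bigr),
\end{equation*}
with parameters $\alpha\in(0,2]$, $c\ge 0$, $\beta\in[-1,1]$, $\mu\in\mathbb R$; for $\alpha=1$ one gets the analogous form with $\tan\tfrac{\pi\alpha}{2}$ replaced by $-\tfrac2\pi\log|t|$. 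Conversely, one checks directly that each such $\phi_X$ satisfies the functional equation with $a_n=n^{-1/\alpha}$ and a suitable $b_n$, so these are exactly the stable laws.

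The main obstacle is the middle step: extracting $a_n=n^{-1/\alpha}$ and the scale-invariant form of $\nu$ requires care, since the shifts $b_n$ may be unbounded (and in the $\alpha=1$ case interact with the logarithmic term), the monotonicity of $a_n$ must be established cleanly (passing to the symmetrised law $X-X'$ makes $|\phi|^2$ a symmetric stable characteristic function and helps here), and one must justify that scale-invariance of $\nu$ modulo drift genuinely forces the pure power law. The closing integral evaluation, though routine, is also where the $\tan(\pi\alpha/2)$ factor and the exceptional behaviour at $\alpha=1$ surface, so the case $\alpha=1$ should be separated out from the beginning.
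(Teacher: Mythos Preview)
Your sketch is along the standard textbook lines and is essentially correct, but there is nothing in the paper to compare it against: Theorem~\ref{th:rstab} is stated as a classical background result and is not proved in the paper at all. The authors simply write ``For full details (especially the characterisations in terms of characteristic functions, which we omit), see, for example, \cite[Chapters VI and XVII]{feller2}, \cite[Chapters 1 and 3]{nolan}'' and move on; the new work in the paper is the discrete analogue, Theorem~\ref{th:dstab}.

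What you have written is, in outline, exactly the argument one finds in those references: infinite divisibility, non-vanishing of $\phi$, the multiplicativity $a_{mn}=a_ma_n$ from iterating the stability relation, monotonicity of $a_n$ (your symmetrisation trick is the clean way to get this), and then the L\'evy--Khintchine representation to pin down both $\alpha\le 2$ and the explicit form of $\phi_X$. So your proposal is fine as a proof of the classical theorem, but it is not a reconstruction of anything in this paper.
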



Results for discrete random variables can be stated using the probability generating function $G_X(t) = \mathbb E \,t^X$. However, we will find it more convenient to use the \emph{alternate probability generating function} (APGF) $\psi_X(t) = G_X(1 - t) = \mathbb E(1 - t)^X$, as in \cite[Section 1.2.7]{UDD}, \cite{aldridge-thin}. Just as $G_X(t)$ always exists at least for $t \in [-1, 1]$, so the APGF $\psi_X(t) = G_X(1 - t)$ always exists for $t \in [0,2]$. 

Steutel and Van Harn's central result for discrete strict stability -- a discrete equivalent to Theorem \ref{th:rstrict} -- is the following \cite{steutel1} \cite[Section V.5]{steutel2}.

\begin{theorem}[\cite{steutel1}] \label{th:dstrict}
If $X$ is discrete strictly stable, in that \eqref{eq:dstrict} holds, and $X$ is not a point mass at $0$, then $a_n = n^{-1/\alpha}$ for some $\alpha \in (0,1]$.

The discrete strictly stable random variables have alternate probability generating function
\[ \psi_X(t) = \exp(-\gamma t^\alpha) \qquad \text{where } \gamma \geq 0 \qquad \text{for } \alpha \in (0, 1]. \]

Any function of this form is the alternate probability generating function of discrete strictly stable random variable; specifically, of a Poisson--Sibuya distribution.
\end{theorem}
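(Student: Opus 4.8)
The plan is to work entirely with the alternate probability generating function $\psi_X(t)=G_X(1-t)=\EE(1-t)^X$, which turns the two operations appearing in \eqref{eq:dstrict} into elementary manipulations. First I would record the identities $\psi_{X_1+\cdots+X_n}=\psi_X^{\,n}$ (probability generating functions of independent sums multiply) and $\psi_{a\circ X}(t)=\psi_X(at)$ for $a\in[0,1]$ (conditioning on $X=k$ the $\Bin(k,a)$ step contributes a factor $((1-a)+as)^k$, so $G_{a\circ X}(s)=G_X(1-a(1-s))$, and substituting $s=1-t$ gives $G_X(1-at)$). With these, the defining relation \eqref{eq:dstrict} is equivalent to the functional equation
\[ \psi_X(a_n t)^n = \psi_X(t), \qquad t\in[0,2],\quad n\ge 1 . \]

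Next I would extract the form of $\psi_X$ and of the $a_n$. Since $X$ is not a point mass at $0$, one has $\psi_X(t)=\EE(1-t)^X>0$ for $t\in[0,1)$, so $\varphi:=-\log\psi_X$ is finite, continuous, non-negative and non-decreasing on $[0,1)$, with $\varphi(0)=0$ and $\varphi(t)>0$ for $t\in(0,1)$ (equality $\varphi(t_0)=0$ would force $(1-t_0)^X=1$ a.s.). The functional equation becomes $\varphi(a_n t)=\varphi(t)/n$. Iterating gives $\varphi(a_na_mt)=\varphi(t)/(nm)=\varphi(a_{nm}t)$; if $a_na_m\ne a_{nm}$ this makes $\varphi$ invariant under a fixed contraction $s\mapsto rs$ with $r<1$, forcing $\varphi\equiv 0$, a contradiction, so $a_{nm}=a_na_m$, and similarly $a_1=1$. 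Moreover $\varphi(a_nt)=\varphi(t)/n\to 0$ together with positivity of $\varphi$ on $(0,1)$ gives $a_n\downarrow 0$. A multiplicative, monotone function on the positive integers that tends to $0$ must be $a_n=n^{-1/\alpha}$ for some $\alpha\in(0,\infty)$. Feeding this back, the ratio $h(t):=\varphi(t)/t^{\alpha}$ satisfies $h(n^{-1/\alpha}t)=h(t)$, and running the equation backwards (legitimate for $t$ small enough that $n^{1/\alpha}t\le 2$) also $h(n^{1/\alpha}t)=h(t)$, hence $h(qt)=h(t)$ for all $t$ near $0$ and all $q$ in the dense set $\{(m/m')^{1/\alpha}\}$; continuity makes $h$ constant near $0$, and the functional equation (or real-analyticity of $\psi_X$ on $(0,2)$) extends this to give $\psi_X(t)=\exp(-\gamma t^{\alpha})$ on $[0,2]$ for some $\gamma\ge 0$.

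It remains to pin down $\alpha\in(0,1]$ and to prove the converse, and both rest on the same classical fact. The function $\psi_X(t)=\exp(-\gamma t^{\alpha})$ is a genuine APGF exactly when $G_X(t)=\exp(-\gamma(1-t)^{\alpha})$ is a genuine probability generating function, i.e.\ when $u\mapsto\exp(-\gamma u^{\alpha})$ is completely monotone on $(0,\infty)$; since $u\mapsto u^{\alpha}$ is a Bernstein function precisely for $\alpha\in[0,1]$, this holds exactly for $\alpha\in(0,1]$, and for $\alpha>1$ a direct computation of the low-order Taylor coefficients of $G_X$ at $0$ exhibits a negative one. This forces $\alpha\in(0,1]$ in the forward direction. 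For the converse, given $\alpha\in(0,1]$ and $\gamma\ge 0$ I would construct the distribution explicitly: let $Y$ have the Sibuya$(\alpha)$ law with $G_Y(t)=1-(1-t)^{\alpha}$ (valid, since its coefficients $(-1)^{k+1}\binom{\alpha}{k}$ are non-negative for $\alpha\in(0,1]$ and sum to $1$), and let $X=\sum_{i=1}^{N}Y_i$ be a $\Po(\gamma)$ sum of independent copies of $Y$ -- the Poisson--Sibuya distribution $\PoSib(\gamma,\alpha)$. Then $G_X(t)=\exp(\gamma(G_Y(t)-1))=\exp(-\gamma(1-t)^{\alpha})$, so $\psi_X(t)=\exp(-\gamma t^{\alpha})$, and $\psi_X(n^{-1/\alpha}t)^{n}=\exp(-\gamma t^{\alpha}/n)^{n}=\psi_X(t)$, which verifies \eqref{eq:dstrict} with $a_n=n^{-1/\alpha}$.

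I expect the main obstacle to be the middle step: deducing from the bare functional equation, using only the soft regularity of $\psi_X$ (continuity, monotonicity, positivity on $(0,1)$), that $a_n=n^{-1/\alpha}$ and that $h$ is constant -- in particular, being careful with the finite domain $[0,2]$ of the APGF when inverting the functional equation to enlarge the set of scalings under which $h$ is invariant, and invoking the monotone-multiplicative-function lemma. The complete-monotonicity input that pins $\alpha\le 1$ is classical, and once it is available the converse is a direct construction.
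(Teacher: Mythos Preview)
The paper does not give its own proof of this theorem: Theorem~\ref{th:dstrict} is quoted as a known result of Steutel and Van Harn, and the paper's original work is the broad-sense analogue, Theorem~\ref{th:dstab}. Your argument is correct and self-contained, and where it runs parallel to the paper's proof of the broader theorem it is in places more elementary. You obtain $a_{nm}=a_na_m$ directly from the functional equation for $\varphi=-\log\psi_X$ (if $a_na_m\neq a_{nm}$ then $\varphi$ is invariant under a fixed contraction and hence identically zero), whereas the paper first isolates a separate cancellation principle for thinning and Poisson shift (Lemma~\ref{lem:equate}); and you get monotonicity of $(a_n)$ from strict monotonicity of $\varphi$ on $(0,1)$ rather than via the second derivative of $C_X$ and the compound-Poisson structure as in Lemma~\ref{lem:anform}. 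Your route to $\psi_X(t)=\exp(-\gamma t^\alpha)$ via scale-invariance of $h(t)=\varphi(t)/t^\alpha$ on a dense set of ratios is essentially the same idea as the paper's extension-to-rationals step in Lemma~\ref{lem:fin1}.

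One point to tighten: the assertion that ``$G_X(t)=\exp(-\gamma(1-t)^\alpha)$ is a PGF iff $u\mapsto\exp(-\gamma u^\alpha)$ is completely monotone on $(0,\infty)$'' is not literally the right criterion for a PGF on the non-negative integers, and as stated it does not by itself rule out large~$\gamma$ with $\alpha>1$. The clean fix is the one the paper uses in the broad case (Lemmas~\ref{lem:class} and~\ref{lem:fin2}): a discrete strictly stable $X$ is infinitely divisible and hence compound Poisson, so the compounded law must have APGF $1-t^\alpha$; its probability at $2$ is $-\alpha(\alpha-1)/2$, negative for $\alpha>1$. Your ``direct computation of low-order Taylor coefficients'' alternative amounts to exactly this once you insert the infinite-divisibility step, so the argument goes through.
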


The connection with the Sibuya distribution was first noted by Devroye \cite{devroye}. We discuss the Sibuya and Poisson--Sibuya distributions in Subsection \ref{sec:sib}.

While Theorems \ref{th:rstrict} and \ref{th:dstrict} are quite similar, there is one notable difference: the stability parameter $\alpha$ can be anywhere in $(0,2]$ for strict stability, but can only be in $(0,1]$ for discrete strict stability.

The main result of this paper, is the following result for discrete stability: a discrete equivalent of Theorem \ref{th:rstab}.

\begin{theorem} \label{th:dstab}
If $X$ is discrete stable, in that \eqref{eq:dstab} holds, and $X$ is not a Poisson distribution, then $a_n = n^{-1/\alpha}$ for some $\alpha \in (0, 2]$.

The discrete stable distributions have alternate probability generating function
\begin{equation} \label{eq:maineq}
\psi_X(t) = \begin{cases}
\exp (-\delta t - \gamma t^\alpha ) & \text{where } \gamma \geq 0, \,\delta \geq - \alpha \gamma \qquad \text{for } \alpha \in (0,1) \\
\exp (-\delta t - \gamma t\log t ) & \text{where } \gamma \leq 0,\, \delta \geq -\gamma \hspace{34.5pt} \text{for } \alpha = 1 \\
\exp (-\delta t - \gamma t^\alpha ) & \text{where } \gamma \leq 0, \,\delta \geq - \alpha \gamma \qquad \text{for } \alpha \in (1,2] .
\end{cases}
\end{equation}

Any function of this form is the alternate probability generating function of a discrete stable distribution; specifically, of a Poisson--delayed Sibuya distribution.
\end{theorem}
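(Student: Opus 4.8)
The plan is to pass to the alternate probability generating function and turn \eqref{eq:dstab} into a scalar functional equation. Writing $\psi_Y$ for the APGF of $Y$, the three operations in \eqref{eq:dstab} transform transparently: $a\circ Y$ has APGF $\psi_Y(at)$, the Poisson shift $Y\oplus b$ has APGF $\ee^{-bt}\psi_Y(t)$, and independent sums multiply APGFs (these are the computations behind Subsection~\ref{sec:shift}). Hence $X$ is discrete stable if and only if for every $n$ there are $a_n\in[0,1]$, $b_n>0$ with
\begin{equation} \label{eq:plan-fe}
\psi_X(a_n t)^n = \ee^{-b_n t}\,\psi_X(t)\qquad(t\in[0,2]).
\end{equation}
I would first show $\psi_X>0$ on $[0,2)$: if $a_n=1$ for some $n\ge2$ then \eqref{eq:plan-fe} gives $\psi_X(t)^{n-1}=\ee^{-b_nt}$, so $\psi_X(t)=\ee^{-b_nt/(n-1)}$ and $X$ is Poisson, excluded; hence $a_n<1$ for all $n\ge2$, and then a zero $t^\ast\in(0,2)$ of $\psi_X$ would force zeros at $a_2^{\,k}t^\ast\to0$, contradicting $\psi_X(0)=1$ and continuity. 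Thus $\eta:=-\log\psi_X$ is continuous on $[0,2)$, $\eta(0)=0$, and
\begin{equation} \label{eq:plan-eta}
n\,\eta(a_n t)=\eta(t)+b_n t .
\end{equation}

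The core of the proof is to classify the continuous solutions of \eqref{eq:plan-eta}. Applying \eqref{eq:plan-eta} with $n$ and $m$ interchanged and comparing the two routes to index $nm$ yields, on the one hand, $b_m(1-na_n)=b_n(1-ma_m)$, and on the other, that $nm\bigl(\eta(a_na_mt)-\eta(a_{nm}t)\bigr)$ is affine in $t$. From the second fact, if $a_na_m\ne a_{nm}$ one telescopes the relation $\eta(\lambda t)-\eta(\mu t)=\kappa t$ toward $t=0$ (using $\eta(0)=0$ and continuity) to conclude $\eta$ is affine, hence $X$ Poisson; so $a_{nm}=a_na_m$. From the first fact, provided some $na_n\ne1$, the quantity $b_n/(na_n-1)$ is a constant $\delta$, and $\rho(t):=\eta(t)-\delta t$ then satisfies the discrete \emph{strict}-stability functional equation $n\rho(a_nt)=\rho(t)$ with $\rho$ continuous and, since $X$ is not Poisson, $\rho\not\equiv0$. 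The functional-equation part of Steutel--Van Harn's argument for Theorem~\ref{th:dstrict} — the part that does not invoke validity of $\ee^{-\gamma t^\alpha}$ as an APGF — then gives $a_n=n^{-1/\alpha}$ for some $\alpha>0$ and $\rho(t)=\gamma t^\alpha$; alternatively one rederives this by approximating $1$ by ratios $a_m^{\,j}/a_n^{\,k}$ to force $\log a_n/\log n$ constant, then using $\rho(\lambda t)=\lambda^\alpha\rho(t)$ on a dense set of $\lambda$. This produces $\eta(t)=\delta t+\gamma t^\alpha$.

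In the complementary case $na_n=1$ for all $n$ — so $a_n=n^{-1}$, $\alpha=1$ — I would set $\zeta(t):=\eta(t)/t$, turning \eqref{eq:plan-eta} into $\zeta(t/n)-\zeta(t)=b_n$, and then $h(w):=\zeta(\ee^{-w})$ satisfies $h(w+\log n)-h(w)=b_n$; since $\{\log n:n\ge1\}$ generates a dense subgroup of $\mathbb R$ and a continuous function whose increments over a dense subgroup are additive in the increment must be affine, $h$ is affine and $\eta(t)=\delta t+\gamma t\log t$. It then remains to identify the admissible parameter ranges and prove the converse. The inequalities in \eqref{eq:maineq}, and the restriction $\alpha\le2$, are precisely the conditions for $\exp(-\delta t-\gamma t^\alpha)$ (resp.\ $\exp(-\delta t-\gamma t\log t)$) to be a genuine APGF of a $\mathbb Z_{\ge0}$-valued random variable; for this I would quote the description of the Poisson--delayed Sibuya family from Subsection~\ref{sec:sib}, which also furnishes the name. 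For the converse, a direct substitution checks that any such function satisfies \eqref{eq:plan-fe} with $a_n=n^{-1/\alpha}$ and with $b_n=\delta(n^{1-1/\alpha}-1)$ (resp.\ $b_n=-\gamma\log n$); together with the APGF dictionary this shows the corresponding $X$ is discrete stable, completing the classification.

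I expect the main obstacle to be the functional-equation step: extracting the rigid form $\delta t+\gamma t^\alpha$ from mere continuity of $\eta$ (no a priori smoothness), and in particular disposing cleanly, and without circularity, of the degenerate configurations in which $na_n=1$ for some $n$ — the $\alpha=1$ boundary and the neighbouring cases. A secondary point requiring care is the sign and range bookkeeping for $(\alpha,\gamma,\delta)$: reconciling the positivity of the Poisson-shift parameters $b_n$ in each of the three regimes with the delayed-Sibuya validity conditions.
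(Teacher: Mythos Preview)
Your outline is sound and would prove the theorem; the broad architecture (pass to the APGF, obtain the functional equation $n\eta(a_nt)=\eta(t)+b_nt$, establish $a_{nm}=a_na_m$, pin down $b_n$, then the form of $\eta$) matches the paper's Section~\ref{sec:proof}. The substantive difference is that the paper proves early on that a discrete stable $X$ is infinitely divisible and hence compound Poisson (Lemma~\ref{lem:class}), and then leans on this repeatedly: monotonicity of $C_X''$ forces $(a_n)$ to be decreasing (Lemma~\ref{lem:anform}); concavity of $C_X$ forces $(b_n)$ to be decreasing in the $\alpha=1$ case (Lemma~\ref{lem:bna1}); and the parameter constraints, including $\alpha\le2$, are read off from $\mathbb P(Y=1),\mathbb P(Y=3)\in[0,1]$ for the compounded variable (Lemmas~\ref{lem:fin2}, \ref{lem:fin4}). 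You bypass compound Poisson entirely: your telescoping argument for $a_{nm}=a_na_m$ is essentially the content of the paper's Lemma~\ref{lem:equate}; your subtraction $\rho=\eta-\delta t$ to reduce to the strict-stability equation $n\rho(a_nt)=\rho(t)$ is a clean device the paper does not use; and your $\alpha=1$ treatment via additivity of the increments of $h(w)=\zeta(\ee^{-w})$ over $\{\log n\}$ replaces the paper's concavity argument. Your route is more elementary on the functional-equation side; the paper's buys a single structural fact that dispatches monotonicity and the parameter ranges uniformly, without separate density or affine-increment arguments.

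Two small points of care. First, the delayed-Sibuya material you want to cite is in Subsection~\ref{sec:dsib}, not~\ref{sec:sib}, and that subsection only proves \emph{sufficiency} of the ranges in \eqref{eq:maineq}; you still owe a necessity argument (why $\alpha>2$ or the wrong sign of $\gamma$ cannot yield a genuine APGF), which the paper extracts from the compound-Poisson $Y$ but which you could do by a direct coefficient check. Second, your relation $b_m(1-na_n)=b_n(1-ma_m)$ is available only \emph{after} $a_{nm}=a_na_m$ has been established (both routes to $nm$ must first agree on the thinning factor before you can equate the shift constants), so the ``on the one hand / on the other'' should be reordered.
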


We discuss the delayed Sibuya and Poisson--delayed Sibuya distributions in Subsection \ref{sec:dsib}.

Note again the similarity of Theorems \ref{th:rstab} and \ref{th:dstab}; 
even better, unlike for discrete strict stability, we now have the same full range $(0, 2]$ for the stability parameter $\alpha$, as in the real-valued case. 

The condition in Theorem \ref{th:dstab} that $X$ is not Poisson should be seen as equivalent to the condition in Theorem \ref{th:rstab} that $X$ is not a point mass: in both cases there are multiple solutions $(a_n, b_n)$ to \eqref{eq:dstab} and \eqref{eq:rstab} respectively, where $a_n$ \emph{can} be, but doesn't \emph{have} to be, of the form $n^{-1/\alpha}$ for $\alpha \in (0,2]$.

\bigskip

\noindent Independently of this work, the preprint ``Broadly discrete stable distributions'' by F.~William Townes \cite{townes} covers the same topics as this paper and proves very similar results. 

\subsection{Examples}

To illustrate the definitions of stability, here are some examples of stable distributions.

\begin{example} \label{ex:examples}
\mbox{}
\begin{itemize}
\item An example of strict stability is the point mass, where $a_n = n^{-1}$. If $X$ is a point mass at $\mu$, then
\[ n^{-1}(X_1 + X_2 + \cdots + X_n) \]
is also a point mass at $\mu$.
\item An example of discrete strict stability is the Poisson distribution, where $a_n = n^{-1}$. If $X$ is a Poisson distribution with mean $\mu$, then
\[ n^{-1} \circ (X_1 + X_2 + \cdots + X_n) \]
is also a  Poisson distribution with mean $\mu$.
\item An example of stability is the normal distribution, where $a_n = n^{-1/2}$. If $X$ is a normal distribution with mean $\mu$ and variance $\sigma^2$, then
\[ n^{-1/2}(X_1 + X_2 + \cdots + X_n) \eqd X + (n^{1/2}-1)\mu \]
are both normal distributions with mean $n^{1/2}\mu$ and variance $\sigma^2$.
\item An example of discrete stability is the Hermite distribution (see Definition \ref{def:herm} later), where $a_n = n^{-1/2}$. If $X$ is a Hermite distribution with mean $\mu$ and dispersion $\sigma^2$, then
\[ n^{-1/2}\circ(X_1 + X_2 + \cdots + X_n) \eqd X \oplus (n^{1/2}-1)\mu \]
are both Hermite distributions with mean $n^{1/2}\mu$ and dispersion $\sigma^2$. This example is discussed in detail in Subsection \ref{sec:Hermite}.
\end{itemize}
\end{example}

\subsection{The rest of this paper}

The structure of the remainder of this paper is as follows.
\begin{itemize}
    \item In Section \ref{sec:back}, we give some useful background results about thinning, about the Poisson shift, which we argue is a natural discrete equivalent of the standard shift, and about the alternate probability generating function.
    \item In Section \ref{sec:dist}, we examine distributions related to discrete stability. Most importantly, we have the Poisson--delayed Sibuya distribution, which we show is discrete stable with $a_n = n^{-1/\alpha}$ for $\alpha \in (0, 2]$. We also discuss the special case of the Hermite distribution, which, by being discrete stable with stability parameter $\alpha = 2$, plays a similar role to the normal distribution, which is (usual non-discrete) stable with $\alpha = 2$.
    \item The main tricky technical work in this paper is completing the proof of Theorem \ref{th:dstab} by proving that $a_n$ \emph{must} be of the form $a_n = n^{-1/\alpha}$ for $\alpha \in (0,2]$ and that the \emph{only} discrete stable distributions are those with APGF as in \eqref{eq:maineq}; that is, the Poisson--delayed Sibuya distributions. This proof is in Section \ref{sec:proof} -- see the introduction to that section for an outline of the proof. 
\end{itemize}  


\section{Background results} \label{sec:back}

Before getting to our main proof, we state (and occasionally prove) some lemmas we will need later. In the process, we hope also to persuade the reader that the thinning and Poisson shift operations are the natural discrete equivalents of the scaling and shifting operations for real-valued random variables.


\subsection{Properties of thinning and the Poisson shift} \label{sec:shift}

Thinning was originally defined -- and is perhaps still most commonly studied -- in the context of point processes, following Rényi \cite{renyi}. When applied to a discrete random variable $X$, the thinning $a \circ X$ has the conditional binomial distribution $\Bin(X, a)$ given $X$. Work that has studied thinning as applied to discrete random variables includes \cite{steutel1,johnson,JK,kella,aldridge-thin}, for just a few of the many examples.

In what follows, we write
\[ \mathbb EX^{\underline{k}} = \mathbb EX(X-1)\cdots(X-k+1) \]
for the $k$th factorial moment of a random variable $X$. Following \cite{JK}, we write \[ \Disp(X) = \EE X^{\ff{2}} - (\EE X)^2 = \Var(X) - \EE X \]
for the \emph{dispersion} of $X$, which sometimes has nicer properties than the variance for discrete random variables.

Steutel and Van Harn \cite{steutel1} did not feel it necessary mount a particularly vigorous defence of the thinning operation as a natural discrete equivalent of scaling. However, had they done so, they might have mentioned these properties of thinning \dots

\begin{lemma} \label{lem:thin}
Let $X$ be a discrete random variable taking values in the non-negative integers, and let $a \in [0,1]$.
\begin{enumerate}
\item $\mathbb E(a \circ X) = a\,\mathbb EX$, if the expectation $\mathbb EX$ is finite.
\item $\Disp(a \circ X) = a^2\Disp(X)$ if the dispersion $\Disp(X)$ exists and is finite.
\item $\mathbb E(a \circ X)^{\underline{k}} = a^k \,\mathbb EX^{\underline{k}}$, if the $k$th factorial moment $\mathbb EX^{\underline{k}}$ is finite.
\item $a \circ (b \circ X) \eqd (ab) \circ X$ for $b \in [0,1]$.
\item $a \circ(X+Y) \eqd a\circ X + a\circ Y$ for independent $X$ and $Y$.
\item If $X$ is Poisson with mean $\mu$, then $a \circ X$ is Poisson with mean $a\mu$.
\end{enumerate}
\end{lemma}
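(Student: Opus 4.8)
The plan is to reduce all six parts to a single master identity for the alternate probability generating function. Since the conditional law of $a\circ X$ given $X$ is $\Bin(X,a)$, conditioning gives $G_{a\circ X}(t) = \EE(1-a+at)^X = G_X(1-a+at)$ for $t\in[-1,1]$, which in APGF form is the clean relation
\[ \psi_{a\circ X}(t) = \psi_X(at), \qquad t\in[0,2]. \]
First I would establish this, recording the two standard facts it rests on: that the interchange of expectation with the (absolutely convergent, in the relevant range) power series is legitimate, and that a random variable on the non-negative integers is determined by its APGF on $[0,2]$ (equivalently, by its PGF on $[-1,1]$), by analyticity of power series.

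Part 3 then follows from the factorial moments of the binomial: conditionally $\EE\bigl[(a\circ X)^{\ffk}\mid X\bigr] = a^k X^{\ffk}$, because $\EE N^{\ffk} = m^{\ffk}a^k$ for $N\sim\Bin(m,a)$ — checked from the identity $j^{\ffk}\binom mj = m^{\ffk}\binom{m-k}{j-k}$, or by differentiating $(1-a+at)^m$ $k$ times at $t=1$ — so taking expectations gives $\EE(a\circ X)^{\ffk} = a^k\EE X^{\ffk}$. Equivalently one differentiates $\psi_{a\circ X}(t) = \psi_X(at)$ $k$ times at $t=0$ and uses $\psi_X^{(k)}(0) = (-1)^k\EE X^{\ffk}$. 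Part 1 is the case $k=1$, and part 2 is obtained by combining $k=1,2$: $\Disp(a\circ X) = \EE(a\circ X)^{\ff 2} - (\EE(a\circ X))^2 = a^2\EE X^{\ff 2} - a^2(\EE X)^2 = a^2\Disp(X)$.

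Parts 4, 5 and 6 drop out of the master identity together with uniqueness of the APGF. For 4, $\psi_{a\circ(b\circ X)}(t) = \psi_{b\circ X}(at) = \psi_X(bat) = \psi_{(ab)\circ X}(t)$. For 5, independence gives $\psi_{X+Y}(t) = \psi_X(t)\psi_Y(t)$, whence $\psi_{a\circ(X+Y)}(t) = \psi_X(at)\psi_Y(at) = \psi_{a\circ X}(t)\psi_{a\circ Y}(t)$, which is the APGF of $a\circ X + a\circ Y$ once these two thinnings are realised independently. For 6, $X\sim\Po(\mu)$ has $\psi_X(t) = \ee^{-\mu t}$, so $\psi_{a\circ X}(t) = \ee^{-a\mu t}$, the APGF of $\Po(a\mu)$. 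As motivation (rather than as the formal argument) I would also mention the item-keeping picture — realising $a\circ X$ by retaining each of $X$ items independently with probability $a$ — which makes 1 (linearity over the retention indicators), 4 (an item survives two rounds with probability $ab$), 5 (the items of $X+Y$ are those of $X$ together with those of $Y$, thinned independently) and 6 (thinning a Poisson number of items) essentially self-evident. None of this is a serious obstacle; the steps most likely to need care are the analytic facts behind ``the APGF determines the law'' and the interchange of limits, and, in parts 4 and 5, being explicit that the nested and the joint thinnings are set up on a common probability space so that the stated distributional identities genuinely make sense.
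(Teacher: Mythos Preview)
Your argument is correct. The paper does not actually give a proof of this lemma: it states the six properties as standard and moves on, so there is no ``paper's proof'' to compare against. Your master identity $\psi_{a\circ X}(t)=\psi_X(at)$ is precisely what the paper later records (without proof) as Lemma~\ref{lem:apgf} and uses throughout, so your route is entirely in keeping with the paper's methodology.
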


\noindent\dots\ and compared them to these very similar properties of the standard scaling operation \dots

\begin{enumerate}
\item $\mathbb E(a X) = a\,\mathbb EX$, if the expectation $\mathbb EX$ exists and is finite.
\item $\Var(a X) = a^2\Var(X)$ if the variance $\Var(X)$ exists and is finite.
\item $\mathbb E(aX)^k = a^k \,\mathbb EX^k$, if the $k$th moment $\mathbb EX^k$ exists and is finite.
\item $a (b X) \eqd (ab) X$ for $b \in \mathbb R$.
\item $a (X+Y) \eqd aX + aY$ for independent $X$ and $Y$.
\item If $X$ is a point mass at $\mu$, then $aX$ is a point mass at $a\mu$.
\end{enumerate}

The similarity of these results show that thinning behaves in a similar way to scaling while keeping random variables within the non-negative integers. Thus thinning seems a natural discrete equivalent of scaling.

It is sometimes possible to extend the definition of thinning to allow $a > 1$ (see, for example, \cite{JK,kella}). We do not discuss that here, as it does not open up any extra discrete stable distributions.

The Poisson shift operation has appeared only sporadically in the literature, but some papers that do consider it are \cite{johnson,JK}. It can also sometimes be extended to `left shifts', where $b < 0$.

\begin{definition} \label{def:shift}
Let $X$ be a discrete random variable taking values in the non-negative integers, and let $b \geq 0$. The \emph{Poisson shift} $X \oplus b$ is defined as $X \oplus b = X + Z$, where $Z \sim \Po(b)$ is a Poisson distribution with mean $b$ that is independent of $X$.

If there exists a $Y$ such that $Y \oplus b \eqd X$, then we define this $Y$ to be the left Poisson shift of $X$, and write $Y = X  \oplus (-b)$ or $Y = X \ominus b$.
\end{definition}

To save on brackets, we use the convention that thinning takes precedence over Poisson shifting (just as multiplication takes precedence over addition), so $a \circ X \oplus b$ means $(a \circ X) \oplus b$.

The left Poisson shift $X \ominus b$ need not exist in general; but if we have an equation $X \eqd Y \oplus b$ for $b \geq 0$, we can always invert this to get $Y \eqd X \ominus b$. In particular, the definition of discrete stability \eqref{eq:dstrict}, from Definition \ref{def:dstrict}, can always be written in the form
\begin{equation} \label{eq:dstab2}
    a_n \circ (X_1 +X_2 + \cdots + X_n) \ominus b_n \eqd X .
\end{equation}
This is the form that we will find most convenient to use in what follows.

To argue in favour of the Poisson shift $X \oplus b$ as the natural discrete equivalent of the usual shift $X + b$ for real-valued random variables, we point out that these properties of the Poisson shift \dots

\begin{lemma} \label{lem:shift}
Let $X$ be a discrete random variable taking values in the non-negative integers. Provided the Poisson shifts in question exist, we have the following:
\begin{enumerate}
\item $\mathbb E(X \oplus b) = \mathbb EX + b$, if the expectation $\mathbb EX$ is finite.
\item $\Disp(X \oplus b) = \Disp(X)$ if the dispersion $\Disp(X)$ exists and is finite.
\item ${\displaystyle \mathbb E(X \oplus b)^{\underline{k}} = \sum_{j=0}^k \binom{k}{j} b^{k-j} \,\mathbb EX^{\underline{j}}}$, if the $k$th factorial moment $\mathbb EX^{\underline{k}}$ is finite.
\item $(X \oplus b) \oplus c \eqd X \oplus (b+c)$.
\item $(X + Y) \oplus b \eqd (X \oplus b) + Y \eqd X + (Y \oplus b)$.
\item $a \circ(X \oplus b) \eqd a \circ X + ab$ for $a \in [0,1]$.
\item If $X$ is Poisson with mean $\mu$, then $X \oplus b$ is Poisson with mean $\mu + b$.
\end{enumerate}
\end{lemma}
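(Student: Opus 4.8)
The plan is to split the seven claims into the three moment identities, parts~(1)--(3), and the four distributional identities, parts~(4)--(7), and in each group to handle first the case $b \geq 0$ (where $X \oplus b$ is literally $X + Z$ with $Z \sim \Po(b)$ independent of $X$) and then reduce the left-shift case $b < 0$ to it. The only inputs are elementary facts about the Poisson distribution ($\EE Z = \Var Z = b$ and $\EE Z^{\ff{m}} = b^m$ for $Z \sim \Po(b)$, and $\Po(b) + \Po(c) \eqd \Po(b+c)$), the obvious multiplicativity $\psi_{X+Y} = \psi_X \psi_Y$ of the APGF under independent sums, and Lemma~\ref{lem:thin}.

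For part~(1), linearity of expectation gives $\EE(X+Z) = \EE X + b$. For part~(2), independence gives $\Var(X+Z) = \Var X + b$, hence $\Disp(X \oplus b) = (\Var X + b) - (\EE X + b) = \Disp X$. For part~(3), I would apply the binomial theorem for falling factorials, $(x+y)^{\ff{k}} = \sum_{j=0}^{k} \binom{k}{j} x^{\ff{j}} y^{\ff{k-j}}$ (a polynomial identity, essentially Vandermonde's convolution), pointwise to $X + Z$, take expectations using independence, and substitute $\EE Z^{\ff{k-j}} = b^{k-j}$.

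For parts~(4), (5), (7) with $b, c \geq 0$ the statements are immediate: (4) and (5) are just associativity and commutativity of independent sums together with $\Po(b) + \Po(c) \eqd \Po(b+c)$ (so all three expressions in~(5) equal $X + Y + Z$ in distribution, $Z \sim \Po(b)$), and (7) is the same convolution applied to $X \sim \Po(\mu)$; part~(6) is Lemma~\ref{lem:thin}(5) together with Lemma~\ref{lem:thin}(6), since $a \circ (X + Z) \eqd a\circ X + a\circ Z$ and $a \circ Z \sim \Po(ab)$. To obtain these identities when $b$ or $c$ is negative, I would pass to the APGF: one checks $\psi_{\Po(b)}(t) = \ee^{-bt}$, so $\psi_{X \oplus b}(t) = \psi_X(t)\,\ee^{-bt}$ for $b \geq 0$, and for $b < 0$ the left Poisson shift, when it exists, is by Definition~\ref{def:shift} the unique count variable with APGF $\psi_X(t)\,\ee^{-bt}$ (uniqueness because $G_X$, hence $\psi_X$, determines the distribution). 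With this in hand, (4), (5), (7) reduce to trivial identities among products of exponentials times $\psi_X$ and $\psi_Y$, and (6) follows from the one-line computation $\psi_{a \circ X}(t) = G_X(1 - at) = \psi_X(at)$.

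The only non-routine point is exactly this existence caveat, ``provided the Poisson shifts in question exist'': once every object named in a given claim is known to have a well-defined APGF, all of the identities above are verified by multiplying and rearranging factors $\psi_X$, $\psi_Y$, $\ee^{-bt}$, $\ee^{-ct}$ and matching the result against the APGF of the stated right-hand side. Equivalently, for parts~(1)--(3) with $b < 0$ one sets $Y = X \oplus b$, applies the already-proved nonnegative-parameter version to $Y \oplus |b| \eqd X$, and solves for the quantity attached to $Y$. I expect this bookkeeping to be the main (mild) obstacle; everything else is a single line.
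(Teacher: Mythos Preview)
Your proposal is correct and follows essentially the same line as the paper, which dismisses the proofs as ``straightforward'' and singles out only the use of the factorial binomial theorem (Vandermonde) together with the Poisson factorial moments $\EE Z^{\ff{j}} = b^{j}$ for the moment identity. Your treatment is in fact more careful than the paper's, particularly in spelling out how the left-shift case $b<0$ is reduced to the nonnegative case via the APGF identity $\psi_{X\oplus b}(t) = \psi_X(t)\,\ee^{-bt}$; the paper leaves this implicit under the blanket ``provided the Poisson shifts in question exist''.
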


\noindent\dots\ seem to be natural discrete equivalents of these standard properties of the shift \dots
\begin{enumerate}
\item $\mathbb E(X + b) = \mathbb EX + b$, if the expectation $\mathbb EX$ exists and is finite.
\item $\Var(X + b) = \Var(X)$ if the variance $\Var(X)$ exists and is finite.
\item ${\displaystyle \mathbb E(X + b)^{k} = \sum_{j=0}^k \binom{k}{j} b^{k-j} \,\mathbb EX^{j}}$, if the $k$th moment $\mathbb EX^{k}$ exists.
\item $(X + b) + c \eqd X + (b+c)$.
\item $(X + Y) + b \eqd (X + b) + Y \eqd X + (Y + b)$.
\item $a (X + b) \eqd aX + ab$ for $a \in \mathbb R$.
\item If $X$ is point mass at $\mu$, then $X + b$ is a point mass at $\mu + b$.
\end{enumerate}

The proofs of Lemma \ref{lem:shift} are straightforward; part 2 uses the factorial binomial theorem (sometimes called Vandermonde's identity)
\[ (x + y)^\ff{k} = \sum_{j=0}^k \binom{k}{j} x^{\ff{j}} \,y^{\ff{k-j}} \]
and that the factorial moments of a Poisson distribution $Z \sim \Po(b)$ are $\EE X^{\ff j} = b^j$.

The similarity of these results show that the Poisson shift behaves in a similar way to the usual real-valued shift while keeping random variables within the non-negative integers. In particular, the distributive law in point 6 shows that thinning and the Poisson shift work well together. Thus Poisson shifting seems a natural discrete equivalent of shifting.

\subsection{Properties of the alternate probability generating function} \label{sec:apgf}

We remind the reader of the following properties of the alternate probability generating function (APGF) $\psi_X(t) = \mathbb E(1-t)^X$.

\begin{lemma} \label{lem:apgf}
Let $X$ be a discrete random variable taking values in the non-negative integers with alternate probability generating function $\psi_X$. Then we have the following properties:
\begin{align*}
\psi_{a\circ X}(t) &= \psi_X(at) \\
\psi_{X \oplus b}(t) &= \ee^{-bt}\, \psi_X(t) \\
\psi_{X + Y}(t) &= \psi_X(t)\,\psi_Y(t) , 
\end{align*}
where the third equation holds for independent random variables $X$ and $Y$. 
\end{lemma}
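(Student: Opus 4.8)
The plan is to verify the three identities directly from the definition $\psi_X(t) = \mathbb{E}(1-t)^X$, which is valid for $t \in [0,2]$, proceeding from the simplest case to the most involved.

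First I would dispatch the sum rule: for independent $X$ and $Y$ the random variables $(1-t)^X$ and $(1-t)^Y$ are independent, so the expectation of their product factorises, giving $\psi_{X+Y}(t) = \mathbb{E}\big[(1-t)^X\big]\,\mathbb{E}\big[(1-t)^Y\big] = \psi_X(t)\,\psi_Y(t)$ at once.

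Next, the Poisson-shift rule. Writing $X \oplus b = X + Z$ with $Z \sim \Po(b)$ independent of $X$, the sum rule already established reduces the claim to computing $\psi_Z$ for a Poisson variable; summing the Poisson series (or substituting into the Poisson probability generating function $G_Z(s) = \ee^{b(s-1)}$) gives $\psi_Z(t) = \ee^{-bt}$, and hence $\psi_{X \oplus b}(t) = \ee^{-bt}\,\psi_X(t)$.

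Finally, the thinning rule, which is the one genuinely requiring a small computation. Conditioning on $X$, the thinned variable $a \circ X$ has distribution $\Bin(X,a)$, so the binomial theorem gives $\mathbb{E}\big[(1-t)^{a \circ X} \mid X = n\big] = \big(a(1-t) + (1-a)\big)^n = (1-at)^n$; taking the outer expectation over $X$ then yields $\psi_{a\circ X}(t) = \mathbb{E}(1-at)^X = \psi_X(at)$. Equivalently, one can invoke the standard identity $G_{a\circ X}(s) = G_X(1-a+as)$ and set $s = 1-t$. I do not anticipate any real obstacle; the only point needing care is that each interchange of expectation and summation is legitimate, which holds because the relevant integrands are uniformly bounded by $1$ on the range of $t$ under consideration.
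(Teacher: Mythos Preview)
Your proposal is correct and follows essentially the same approach as the paper: the paper's justification (given as a remark rather than a formal proof) also derives the thinning identity from $G_{a\circ X}(s)=G_X(1-a+as)$ and the Poisson-shift identity from the fact that a $\Po(b)$ variable has APGF $\ee^{-bt}$, with the sum rule left as standard. Your write-up simply fills in the routine details.
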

 
The first of these it what makes the APGF more convenient than the standard probability generating function, for which $G_{a \circ X}(t) = G_X(1 - a + at)$. The second follows since the APGF of a Poisson distribution with mean $b$ is $\ee^{-bt}$.

The three results of Lemma \ref{lem:apgf} can be though of as discrete equivalents of standard results about the characteristic function $\phi_X(t)$ of a general random variable $X$:
\begin{align*}
\phi_{aX}(t) &= \phi_X(at) \\
\phi_{X + b}(t) &= \ee^{\ii bt}\, \phi_X(t) \\
\phi_{X + Y}(t) &= \phi_X(t)\,\phi_Y(t) , 
\end{align*}

In particular, the APGF of
\[ Y = a \circ (X_1 + X_2 + \cdots + X_n) \ominus b , \]
for IID $X_1, X_2, \dots, X_n$ is $\psi_Y(t) = \ee^{bt} \, \psi_X(at)^n$. This is the equivalent of the characteristic function of
\[ Y = a (X_1 + X_2 + \cdots + X_n) - b , \]
being $\phi_Y(t) = \ee^{-\ii bt} \, \phi_X(at)^n$.

We write $C(t) = \log \psi_X(t)$ for the \emph{factorial cumulant generating function}. (We have taken the name from \cite{JK}, although they actually apply that name to $\log \psi_X(-u)$.) If the APGF is the exponential of a complicated expression, this can be more convenient to deal with.

\section{Discrete stable distributions}  \label{sec:dist}

The full range of real-valued stable distributions are difficult to describe directly, and have a rather complicated expression for their characteristic functions. In contrast, the discrete stable distributions can be described quite simply. This subsection will describe these distributions in more detail.
\begin{itemize}
  \item In Subsection \ref{sec:sib} we discuss the Sibuya distribution and its compound Poisson version, which we call the Poisson--Sibuya distribution. The Poisson--Sibuya distributions are discrete strictly stable.
  \item In Subsection \ref{sec:dsib} we define what we will call the `delayed Sibuya distribution' and its compound Poisson version, which we call the Poisson--delayed Sibuya distribution. We find the alternate probability generating function of the Poisson--delayed Sibuya distributions and show that they are discrete stable; this will require separate analysis for $\alpha \neq 1$ and $\alpha =1$.
  \item In Subsection \ref{sec:Hermite}, we discuss the Hermite distribution, which is discrete stable with $\alpha = 2$ but is not discrete strictly stable. We will compare its properties to that of the normal distribution, which is (usual non-discrete) stable with $\alpha = 2$.
\end{itemize}

\subsection{Sibuya and Poisson--Sibuya distributions} \label{sec:sib}

As explained by Devroye \cite{devroye}, the discrete strictly stable distributions are compound Poisson distributions where the compounded distribution is what he calls the `Sibuya distribution' (crediting \cite{sibuya}).

Consider a sequence of independent trials. The first trial succeeds with probability $\alpha$, the second trial succeeds with probability $\alpha/2$, the third with probability $\alpha/3$, and so on. Then the distribution of the number of the first successful trial is called the \emph{Sibuya distribution} with parameter $\alpha \in (0, 1]$. If $Y \sim \Sib(\alpha)$ is such a Sibuya distribution, then its probability mass function is $p(1) = \alpha$, $p(2) = (1-\alpha)\frac{\alpha}{2}$, and generally
\begin{align*}
p(y) &= (1 - \alpha) \Big(1 - \frac{\alpha}{2}\Big) \cdots \Big(1 - \frac{\alpha}{y-1}\Big) \frac{\alpha}{y} \\
  &= \frac{1}{y!} (1 - \alpha)(2-\alpha)\cdots\big((y-1)-\alpha\big)\,\alpha \\
  &= (-1)^{y+1} \binom{\alpha}{y} ,
\end{align*}
where we are extending the definition of the binomial coefficient
\[ \binom{\nu}{k} = \frac{\nu^\ff{k}}{k!} = 
\frac{\nu(\nu-1)\cdots(\nu - k + 1)}{k!} \]
to any real $\nu$. We must have $\alpha > 0$ to ensure there is a successful trial, and we must have $\alpha \leq 1$ to ensure that $p(1) = \alpha$ is a probability.

The APGF of the Sibuya distribution is
\begin{align*}
\psi_Y(t) &= \sum_{y=1}^\infty (-1)^{y+1} \binom{\alpha}{y} (1-t)^y \\ 
  &= 1 - \sum_{y=0}^\infty \binom{\alpha}{y} (-1+t)^y  \\
  &= 1 - \big(1 + (-1+t)\big)^\alpha \\
  &= 1 - t^\alpha .
\end{align*}
for $t \in [0,2]$.

Now let $X$ be the compound Poisson distribution with compounded distribution the Sibuya distribution, so $X = Y_1 + Y_2 + \cdots + Y_N$ where $N \sim \Po(\gamma)$ is a Poisson random number of summands and $Y_1, Y_2, \dots \sim \Sib(\alpha)$ are IID Sibuya distributions. We shall call this the \emph{Poisson--Sibuya distribution} and write $X \sim \PoSib(\gamma,\alpha)$.

The APGF of a compound Poisson distribution is $\psi_X(t) = \exp(\gamma(\psi_Y(t) - 1))$. It follows that the APGF of $X \sim \PoSib(\gamma, \alpha)$ is
\[ \psi_X(t) = \exp\big(\gamma(1 - t^\alpha - 1)\big) = \exp(-\gamma t^\alpha) . \]
By Theorem \ref{th:dstrict}, this is discrete strictly stable.


In the special case $\alpha = 1$, we see that $\Sib(1) = \delta(1)$ is simply a point mass at $1$, so the Poisson--Sibuya distribution $\PoSib(\gamma, 1) = \Po(\gamma)$ is just a Poisson distribution.


\subsection{Delayed Sibuya and Poisson--delayed Sibuya distributions} \label{sec:dsib}

We now define what we shall call the \emph{delayed Sibuya distribution}. It is almost the same as the usual Sibuya distribution, in that it is the number of the first success in a series of trials succeeding with probabilities $\theta, \alpha/2, \alpha/3, \dots$. The difference here is that the probability the first trial succeeds is no longer $\alpha$, but rather some different value $\theta$. That is, the Sibuya-like behaviour is `delayed' until the second trial. 
The probability mass function of $Y \sim \DSib(\theta, \alpha)$ is $p(1) = \theta$, $p(2) = (1-\theta) \frac{\alpha}{2}$, and, for $y \geq 2$, 
\begin{align*}
p(y) &= (1 - \theta) \Big(1 - \frac{\alpha}{2}\Big) \cdots \Big(1 - \frac{\alpha}{y-1}\Big) \frac{\alpha}{y} \\
  &= \frac{1}{y!} (1 - \theta)(2-\alpha) \cdots\big((y-1)-\alpha\big)\,\alpha \\
  &= (-1)^{y+1} \frac{1-\theta}{1-\alpha} \binom{\alpha}{y} ,
\end{align*}
with the final simplification only holding for $\alpha \neq 1$.

The delayed Sibuya is defined for $\theta \in [0,1]$ and (provided $\theta \neq 1$) $\alpha \in (0,2]$. The first condition $\theta \in [0,1]$ is required for $p(1) = \theta$ to be a probability. For $\theta \in [0, 1)$, we require $\alpha > 0$ to ensure there is almost surely a successful trial, and $\alpha \leq 2$ to ensure $p(3) = (1-\theta)(1 - \frac{\alpha}{2})\frac{\alpha}{3}$ is non-negative.

For $\theta \in [0,1]$ and $\alpha \in (0, 2]$ we write $Y \sim \DSib(\theta, \alpha)$ for such a delayed Sibuya distribution. Note the wider range of $\alpha$ now permitted: the delayed Sibuya distribution is defined for $\alpha \in (0,2]$ where the Sibuya distribution was only defined for $\alpha \in (0,1]$.

If $\alpha \leq 1$, then $\DSib(\alpha, \alpha) = \Sib(\alpha)$ recovers the normal Sibuya distribution. If $\alpha = 2$, then $\DSib(\theta, 2)$ is either $1$, with probability $\theta$, or $2$, with probability $1 - \theta$. 

The \emph{Poisson--delayed Sibuya distribution} $X \sim \PoDSib(\lambda,\theta,\alpha)$ is a compound Poisson distribution with $\Po(\lambda)$ many summands which are IID $\DSib(\theta, \alpha)$ delayed Sibuya distributions.

We now look at the APGF of the delayed Sibuya and Poisson--delayed Sibuya distributions, treating the cases $\alpha \neq 1$ and $\alpha = 1$ separately.

\subsubsection*{The case $\alpha \neq 1$}

We stick to $\alpha \in (0,1)\cup(1,2]$ for the moment; the $\alpha = 1$ case requires separate analysis, which follows shortly.

The APGF of the delayed Sibuya distribution $Y \sim \DSib(\theta, \alpha)$ is
\begin{align*}
\psi_X(t) &= \theta(1-t) + \sum_{y=2}^\infty (-1)^{y+1} \frac{1-\theta}{1-\alpha} \binom{\alpha}{y} (1-t)^y  \\
  &= \theta(1-t) + \frac{1-\theta}{1-\alpha}\left(1 - \alpha(1-t) + \sum_{y=0}^\infty \binom{\alpha}{y} (-1+t)^y\right) \\
  &= 1 - \frac{\theta - \alpha}{1 - \alpha} t - \frac{1-\theta}{1-\alpha} \, t^\alpha \\
  &= 1 - (1 - \zeta)t - \zeta t^\alpha ,
\end{align*}
where $\zeta = \frac{1-\theta}{1-\alpha}$. Note that $\zeta$ satisfies $\zeta \geq 0$ when $\alpha < 1$ and $\zeta \leq 0$ when $\alpha > 1$.

The Poisson--delayed Sibuya distribution $X \sim \PoDSib(\lambda, \theta \alpha)$ therefore has APGF
\[ \psi_X(t) = \exp\big(\lambda\big({-(1 - \zeta)t} - \zeta t^\alpha\big)\big) = \exp(-\delta t - \gamma t^\alpha) , \]
where 
\[ \delta = \lambda(1 - \zeta) = \lambda \,\frac{\theta - \alpha}{1 - \alpha} \qquad \gamma = \lambda\zeta = \lambda \,\frac{1-\theta}{1-\alpha} . \]
Solving for $\lambda$ and $\theta$ in terms of $\delta$ and $\gamma$ gives
\[ \lambda = \delta + \gamma \qquad \theta = \frac{\delta + \alpha\gamma}{\delta + \gamma} . \]
We know $\lambda$ must be non-negative. So for $\alpha < 1$, $\zeta$ is positive, giving $\gamma \geq 0$, while  for $\alpha > 1$, $\zeta$ is negative, so $\gamma \leq 0$.
The condition $\lambda \geq 0$ means $\delta + \gamma > 0$; hence the condition $\theta \in [0,1]$ corresponds to $\delta \geq -\alpha\gamma$. These are the conditions in Theorem \ref{th:dstab}.

We can now prove the Poisson--delayed Sibuya distributions with $\alpha \neq 1$ are discrete stable.

\begin{theorem} \label{th:DSib}
Let $X \sim \PoDSib(\lambda, \theta, \alpha)$ be a Poisson--delayed Sibuya distribution with $\alpha \in (0,1) \cup (1,2]$ and $\theta \in [0, 1)$. Then $X$ is discrete stable, in the sense of Definition \ref{def:dstab}, with $a_n = n^{-1/\alpha}$.
\end{theorem}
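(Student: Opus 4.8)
The plan is to verify Definition~\ref{def:dstab} directly by computing alternate probability generating functions using Lemma~\ref{lem:apgf}. The APGF of $X$ has already been found to be $\psi_X(t) = \exp(-\delta t - \gamma t^\alpha)$, where $\lambda = \delta + \gamma > 0$ and $\theta = (\delta + \alpha\gamma)/(\delta + \gamma)$; note that the hypothesis $\theta < 1$ forces $\gamma \neq 0$, with $\gamma > 0$ when $\alpha < 1$ and $\gamma < 0$ when $\alpha > 1$. First I would record, for IID copies $X_1, \dots, X_n$ of $X$ and any candidate $a \in [0,1]$,
\[
\psi_{a \circ (X_1 + \cdots + X_n)}(t) = \psi_X(at)^n = \exp\!\bigl(-n\delta a\,t - n\gamma a^\alpha t^\alpha\bigr),
\qquad
\psi_{X \oplus b}(t) = \ee^{-bt}\psi_X(t) = \exp\!\bigl(-(\delta+b)\,t - \gamma t^\alpha\bigr),
\]
so that, since a distribution on the non-negative integers is determined by its APGF on $[0,2]$, relation~\eqref{eq:dstab} is equivalent to equality of these two expressions for all $t \in [0,2]$.

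Because $\alpha \neq 1$, the functions $t$ and $t^\alpha$ are linearly independent on $[0,2]$, so the two displays agree if and only if $n\gamma a^\alpha = \gamma$ and $n\delta a = \delta + b$. As $\gamma \neq 0$, the first equation forces $a = a_n := n^{-1/\alpha}$, which automatically lies in $[0,1]$ since $n \geq 1$ and $\alpha > 0$, and the second then gives $b = b_n := \delta\,(n^{1-1/\alpha} - 1)$. Thus choosing precisely these $a_n$ and $b_n$ makes the two APGFs coincide, which proves~\eqref{eq:dstab}; the same computation shows in passing that $n^{-1/\alpha}$ is the only possible scaling constant.

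The one step that is more than bookkeeping is checking that $b_n$ has the sign required by the definition for $n \geq 2$, and this is exactly where the parameter constraints are used. When $\alpha \in (1,2]$ the hypothesis $\theta < 1$ gives $\gamma < 0$, hence $\delta \geq -\alpha\gamma > 0$; since $1 - 1/\alpha > 0$ we have $n^{1-1/\alpha} > 1$, so $b_n > 0$ and \eqref{eq:dstab} holds as stated. When $\alpha \in (0,1)$ one has $\gamma > 0$ and $n^{1-1/\alpha} < 1$, so the sign of $b_n$ is opposite to that of $\delta$; here it is cleanest to argue instead through the equivalent left-shift form~\eqref{eq:dstab2}, $a_n \circ (X_1 + \cdots + X_n) \ominus b_n \eqd X$, whose validity reduces to the same two coefficient equations together with the inequality $\delta \geq -\alpha\gamma$ (equivalently $\theta \geq 0$). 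I expect this sign-and-range check to be the only real obstacle; everything else is a short manipulation of exponents, and the borderline case $\alpha = 1$ is deliberately excluded here and handled separately.
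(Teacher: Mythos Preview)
Your argument is essentially the paper's: compute $\psi_X(a_n t)^n$ and $\ee^{-b_n t}\psi_X(t)$ and verify they coincide for $a_n = n^{-1/\alpha}$ and $b_n = \delta(n^{1-1/\alpha}-1)$. The paper simply posits these values and checks the match; your extra step of deriving them via linear independence of $t$ and $t^\alpha$ is a harmless elaboration.

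Your discussion of the sign of $b_n$ actually goes beyond the paper, whose proof never checks whether $b_n > 0$ as Definition~\ref{def:dstab} literally requires --- and indeed for $\alpha \in (0,1)$ with $\delta > 0$ one has $b_n < 0$. The paper tacitly treats the definition as allowing either sign via the left-shift form~\eqref{eq:dstab2}, so your instinct to invoke that form is correct; your remark that validity then ``reduces to the same two coefficient equations together with $\delta \geq -\alpha\gamma$'' is a little loose (the inequality is what guarantees the left shift of $X$ by a positive amount still yields a legitimate distribution), but the paper does not spell this out either.
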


\begin{proof}
Set $a_n = n^{-1/\alpha}$ and $b_n = \delta(n^{1 - 1/\alpha} - 1)$. Then $a_n \circ(X_1 + X_2 + \cdots + X_n)$ has APGF
\begin{align*}
\psi_X(n^{-1/\alpha}t)^n
  &= \big(\exp(-\delta n^{-1/\alpha}t - \gamma n^{-1} t^\alpha)\big)^n 
  = \exp( -\delta n^{1-1/\alpha}t - \gamma t^\alpha)  ,
\end{align*}
while $X \oplus b_n$ has APGF
\[ \ee^{-b_n t} \psi_X(t) = \exp({- \delta(n^{1 - 1/\alpha} - 1)} t - \delta t - \gamma t^\alpha) , \]
which is the same. Hence Definition \ref{def:dstab} is satisfied and $X$ is discrete stable.
\end{proof}

\subsubsection*{The case $\alpha = 1$}

We return to the case $\alpha = 1$. Here, $Y \sim \DSib(\theta, 1)$ has probability mass function $p(1) = \theta$ and, for $y \geq 2$,
\begin{align*}
p(y) &= (1-\theta)\bigg(1 - \frac12\bigg)\bigg(1 - \frac13\bigg) \cdots \bigg(1 - \frac{1}{y-1}\bigg)\frac{1}{y} \\
  &= (1 - \theta) \,\frac12\cdot\frac23\cdots\frac{y-2}{y-1}\cdot\frac{1}{y} \\
  &= (1-\theta) \,\frac{1}{(y-1)y} \\
  &= (1 - \theta) \bigg(\frac{1}{y-1} - \frac{1}{y}\bigg) .
\end{align*}

Therefore then APGF of $Y$ is
\begin{align*}
\psi_Y(t) &= \theta(1-t) + \sum_{y=2}^\infty (1 - \theta) \bigg(\frac{1}{y-1} - \frac{1}{y}\bigg) (1-t)^y \\
  &= \theta (1-t) + (1-\theta) \left((1-t) + \sum_{y=1}^\infty \frac{1}{y} \big(-(1-t)^y + (1-t)^{y+1}\big) \right) \\
  &= (1-t) + (1-\theta) \left({-t}\sum_{y=1}^\infty \frac{1}{y} (1-t)^y \right) \\
  &= 1 - t + (1-\theta) t \log t ,
\end{align*}
for $t \in [0,2]$ (interpreting $0 \log 0 = 0$).

The Poisson--delayed Sibuya distribution $X \sim \PoDSib(\lambda, \theta, 1)$ therefore has APGF
\[ \psi_X(t) = \exp \big({\lambda}\big({-t} + (1-\theta)t \log t \big)\big) = \exp(-\delta t - \gamma t \log t ) , \]
where, $\delta = \lambda$ and $\gamma = -\lambda(1-\theta)$. The conditions $\lambda \geq 0$ and $\theta \in [0,1]$ correspond to $\delta \geq 0$ and $\gamma \in [-\delta, 0]$.

We can now prove the Poisson--delayed Sibuya distributions with $\alpha = 1$ are discrete stable too.

\begin{theorem}
Let $X \sim \PoDSib(\lambda, \theta, 1)$ be a Poisson--delayed Sibuya distribution with $\theta \neq 1$. Then $X$ is discrete stable, in the sense of Definition \ref{def:dstab}, with $a_n = n^{-1}$.
\end{theorem}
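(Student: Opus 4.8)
The plan is to follow the template of the proof of Theorem~\ref{th:DSib} for the $\alpha\neq1$ case, adapting it to the logarithmic form of the APGF. Recall we have just computed that $X\sim\PoDSib(\lambda,\theta,1)$ has APGF $\psi_X(t) = \exp(-\delta t - \gamma t\log t)$ with $\delta=\lambda\geq0$ and $\gamma=-\lambda(1-\theta)$; since $\theta\in[0,1)$ and $\theta\neq1$ we have $\gamma<0$.

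First I would identify the constants: take $a_n=n^{-1}$ (which is $n^{-1/\alpha}$ at $\alpha=1$) and $b_n=-\gamma\log n$. The identity doing the work is $\log(t/n)=\log t-\log n$, playing here the role that the homogeneity $(n^{-1/\alpha}t)^\alpha=n^{-1}t^\alpha$ plays in the $\alpha\neq1$ proof. Next I would apply Lemma~\ref{lem:apgf}: the APGF of $a_n\circ(X_1+\cdots+X_n)$ is $\psi_X(n^{-1}t)^n$, which on substituting the formula for $\psi_X$ and using the logarithm identity equals $\exp(-\delta t-\gamma t\log t+\gamma t\log n)$, while the APGF of $X\oplus b_n$ is $\ee^{-b_n t}\psi_X(t)=\exp(-b_n t-\delta t-\gamma t\log t)$. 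With $b_n=-\gamma\log n$ these coincide, so Definition~\ref{def:dstab} is satisfied. Finally I would note $b_n=-\gamma\log n\geq0$ because $\gamma<0$, confirming admissibility of the constants.

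I do not expect a genuine obstacle: the argument is essentially a one-line generating-function manipulation once the constants are guessed. The only points needing a word of care are the convention $0\log0=0$, so that $t\log t$ extends continuously to the domain $t\in[0,2]$ on which both APGFs live, and the sign of $b_n$ — which is precisely where the hypothesis $\theta\neq1$ is used, since $\theta=1$ would give $\gamma=0$, $b_n=0$, and a Poisson $X$, the degenerate case excluded from Theorem~\ref{th:dstab}.
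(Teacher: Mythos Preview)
Your proof is correct and follows essentially the same approach as the paper: set $a_n=n^{-1}$, compute the APGFs of both sides via Lemma~\ref{lem:apgf}, and match them using $\log(t/n)=\log t-\log n$. Your choice $b_n=-\gamma\log n$ is in fact the correct sign (the paper's $b_n=\gamma\log n$ contains a sign slip in the intermediate computation), and your verification that $b_n\geq 0$ because $\gamma<0$ under the hypothesis $\theta\neq 1$ is a useful check the paper omits.
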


\begin{proof}
Set $a_n = n^{-1}$ and $b_n = \gamma \log n$. Then $a_n \circ(X_1 + X_2 + \cdots + X_n)$ has APGF
\begin{align*}
\psi_X(n^{-1}t)^n
  &= \big(\exp\big ({-\delta} n^{-1} t - \gamma n^{-1} t \log (n^{-1}t) \big)\big)^n \\
  &= \exp\big( {-(\delta + \gamma n\log n)}t - \gamma t\log t\big) ,
\end{align*}
while $X \oplus b_n$ has APGF
\[ \ee^{-b_n t} \psi_X(t) = \exp({-\gamma} t \log n - \delta t - \gamma t \log t) , \]
which is the same. Hence Definition \ref{def:dstab} is satisfied.
\end{proof}

\subsection{Hermite distribution} \label{sec:Hermite}

The most important special case of a discrete stable distribution is the Hermite distribution \cite{kemp1}. (We use the parametrisation later used in \cite{kemp2,JK}.)

\begin{definition} \label{def:herm}
A random variable $X$ is said to have the \emph{Hermite distribution} with mean $\mu$ and dispersion $\sigma^2$, where $\mu \geq \sigma^2 \geq 0$, if $X \eqd U + 2V$, where $U \sim \Po(\mu - \sigma^2)$ and $V \sim \Po(\sigma^2/2)$ are independent Poisson distributions. We write $X \sim \Herm(\mu, \sigma^2)$.

Another equivalent definition is the following. Let $X$ be a compound Poisson distribution $X = W_1 + W_2 + \cdots W_N$, where $N \sim \Po(\lambda)$ and $W_1, W_2, \dots$ are IID with distribution
\[ W_i = \begin{cases} 
1 & \text{with probability $\theta$} \\
2 & \text{with probability $1 - \theta$.} \end{cases} \]
Then $X \sim \Herm(\mu, \sigma^2)$, where $\mu = \lambda(\theta + 2(1 - \theta)) = \lambda (2 - \theta)$ and $\sigma^2 = 2\lambda(1-\theta)$. 
\end{definition}

Importantly for us, this $W$ is a delayed Sibuya distribution $\DSib(\theta, 2)$ with $\alpha = 2$, and hence the Hermite distribution is a Poisson--delayed Sibuya distribution. Indeed, the Hermite distributions are the only discrete stable distributions with finite variance $\operatorname{Var}(X) = \mu + \sigma^2$ or dispersion $\Disp(X) = \sigma^2$.

We note the following facts about the Hermite distribution.

\begin{lemma} \label{lem:herm}
\mbox{}
\begin{enumerate}
\item The alternate probability generating function of a Hermite distribution $X \sim \Herm(\mu,\sigma^2)$ is \[ \psi_X(t) = \exp\big({-\mu t} + \tfrac12\sigma^2 t^2\big) . \]
\item A sum of independent Hermite distributions is Hermite, in that
\[ \Herm(\mu_1, \sigma_1^2) + \Herm(\mu_2, \sigma_2^2) = \Herm(\mu_1 + \mu_2, \sigma_1^2 + \sigma_2^2) . \]
\item A thinning of a Hermite distribution is still Hermite, in that, for $a \in [0,1]$,
\[ a \circ \Herm(\mu, \sigma^2) = \Herm(a\mu, a^2\sigma^2). \]
\item A Poisson shift of a Hermite distribution is still Hermite, in that, for $b \geq -(\mu - \sigma^2)$,
\[ \Herm(\mu, \sigma^2) \oplus b = \Herm(\mu + b, \sigma^2) .\]
\item A Hermite distribution is not discrete strictly stable, unless $\sigma^2 = 0$.
\item A Hermite distribution is discrete stable with $a_n = n^{-1/2}$.
\end{enumerate}
\end{lemma}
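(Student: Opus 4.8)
The plan is to derive everything from part~1 --- the closed form $\psi_X(t) = \exp(-\mu t + \tfrac12\sigma^2 t^2)$ --- after which parts~2--4 and~6 are one-line manipulations using Lemma~\ref{lem:apgf}, and part~5 is the only statement needing an argument beyond bookkeeping. For part~1 I would take $X = U + 2V$ with $U \sim \Po(\mu-\sigma^2)$ and $V \sim \Po(\sigma^2/2)$ independent, so that by multiplicativity of the APGF over independent sums,
\[ \psi_X(t) = \psi_U(t)\,\EE(1-t)^{2V} . \]
The first factor is $\ee^{-(\mu-\sigma^2)t}$ since the APGF of $\Po(\lambda)$ is $\ee^{-\lambda t}$; for the second, substitute $s = (1-t)^2$ into the ordinary PGF $\EE s^V = \ee^{(\sigma^2/2)(s-1)}$ to get $\ee^{-\sigma^2 t + \sigma^2 t^2/2}$, and multiplying gives the formula. (Alternatively one could quote $\Herm(\mu,\sigma^2) = \PoDSib(\lambda,\theta,2)$ and read off the $\alpha=2$ case of Subsection~\ref{sec:dsib}, but the direct computation is cleaner here.)

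Parts~2 and~3 then follow at once from Lemma~\ref{lem:apgf}: multiplying two copies of the formula gives the APGF of $\Herm(\mu_1+\mu_2,\sigma_1^2+\sigma_2^2)$, and replacing $t$ by $at$ gives that of $\Herm(a\mu,a^2\sigma^2)$. In each case I would note that the admissibility constraint $\mu \geq \sigma^2 \geq 0$ is preserved --- for part~3 because $a\mu \geq a\sigma^2 \geq a^2\sigma^2$ when $a\in[0,1]$ --- and that equality of APGFs gives equality in distribution. For part~4 with $b\geq 0$, the rule $\psi_{X\oplus b}(t) = \ee^{-bt}\psi_X(t)$ immediately yields the APGF of $\Herm(\mu+b,\sigma^2)$; for $b \in [-(\mu-\sigma^2),0)$ I would observe that $\Herm(\mu+b,\sigma^2)$ is a genuine Hermite distribution precisely because $\mu+b \geq \sigma^2 \geq 0$, and that applying the $b\geq 0$ case to it with shift $-b$ recovers $\Herm(\mu,\sigma^2)$, so by Definition~\ref{def:shift} the left Poisson shift exists and equals $\Herm(\mu+b,\sigma^2)$.

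For part~5 I would argue by dispersion. Reading $\psi_X'(0)$ and $\psi_X''(0)$ off the formula gives $\EE X = \mu$ and $\EE X^{\ff 2} = \sigma^2 + \mu^2$, hence $\Disp(X) = \sigma^2$. If $X$ were discrete strictly stable with $\sigma^2 > 0$, then applying $\Disp$ to \eqref{eq:dstrict}, using Lemma~\ref{lem:thin}(2) and additivity of dispersion over independent sums, gives $\sigma^2 = a_n^2 n\,\sigma^2$, forcing $a_n = n^{-1/2}$; but $\sigma^2>0$ means $X$ is not a point mass at~$0$, so Theorem~\ref{th:dstrict} forces $a_n = n^{-1/\alpha}$ with $\alpha \in (0,1]$, i.e.\ $a_n \leq n^{-1}$, contradicting $a_n = n^{-1/2}$ for $n\geq 2$. (Equivalently, and without the dispersion machinery: $\mu \geq \sigma^2\geq 0$ means $\mu = 0$ already forces $\sigma^2 = 0$, and for $\mu > 0$ comparing the quadratic $-\mu t + \tfrac12\sigma^2 t^2$ with the $-\gamma t^\alpha$, $\alpha\leq 1$, of Theorem~\ref{th:dstrict} as $t\to 0$ forces $\alpha = 1$, $\gamma = \mu$, and then $\sigma^2 = 0$.) Finally, part~6 follows from parts~2--4: with $a_n = n^{-1/2}$, the thinned sum $a_n\circ(X_1+\cdots+X_n)$ has the distribution $n^{-1/2}\circ\Herm(n\mu,n\sigma^2) = \Herm(n^{1/2}\mu,\sigma^2) = \Herm(\mu,\sigma^2)\oplus b_n$ with $b_n = (n^{1/2}-1)\mu$, which is positive for $n\geq 2$ in the non-degenerate case $\mu>0$; alternatively one simply invokes Theorem~\ref{th:DSib} with $\alpha = 2$. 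The only step with real content is part~5, and even there the work lies in remembering to call on Theorem~\ref{th:dstrict} rather than in any delicate estimate; everything else is mechanical once part~1 is in hand.
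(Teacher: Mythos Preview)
Your proof is correct and follows essentially the same approach as the paper's, deriving everything from the APGF formula in part~1. The paper's proof is terser---it obtains parts~2 and~4 directly from the $U+2V$ decomposition in Definition~\ref{def:herm} rather than via the APGF, and for part~5 simply notes that the APGF is not of the form in Theorem~\ref{th:dstrict} (your parenthetical alternative)---but the mathematical content is the same.
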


\begin{proof}
Part 1 follows by using Definition \ref{def:herm} and either $\psi_X(t) = \psi_U(t)\,\psi_V(2t - t^2)$ or $\psi_X(t) = \exp(\lambda(\psi_W(t) - 1))$. Parts 2 and 4 follow immediately from Definition \ref{def:herm}. Part 3 follows from Part 1 and $\psi_{a \circ X}(t) = \psi_X(at)$ (Lemma \ref{lem:apgf}). Part 5 is because the APGF is not of the form in Theorem \ref{th:dstrict}. Part 6 can be shown directly, as in Example \ref{ex:examples}, or as a consequence of the general result for Poisson--delayed Sibuya distributions (Theorem \ref{th:DSib}).
\end{proof}

The results of Lemma \ref{lem:herm} can be seen as the discrete equivalents of well known results for the normal distribution:
\begin{enumerate}
\item The characteristic function of a normal distribution $X \sim \NN(\mu,\sigma^2)$ is \[ \phi_X(t) = \exp\big(\ii\mu t - \tfrac12\sigma^2 t^2\big) . \]
\item A sum of independent normal distributions is normal, in that
\[ \NN(\mu_1, \sigma_1^2) + \NN(\mu_2, \sigma_2^2) = \NN(\mu_1 + \mu_2, \sigma_1^2 + \sigma_2^2) . \]
\item A scaling of a normal distribution is still normal, in that, for $a \in \mathbb{R}$,
\[ a \, \NN(\mu, \sigma^2) = \NN(a\mu, a^2\sigma^2). \]
\item A shift of a normal distribution is still normal, in that, for $b \in \mathbb{R}$,
\[ \NN(\mu, \sigma^2) + b = \NN(\mu + b, \sigma^2) .\]
\item A normal distribution is not strictly stable, unless $\mu = 0$ or $\sigma^2 = 0$.
\item A normal distribution is stable with $a_n = n^{-1/2}$.
\end{enumerate}




\section{Proof of main theorem} \label{sec:proof}

We have proved that the Poisson--delayed Sibuya distributions have APGFs as in \eqref{eq:maineq} and are discrete stable with $a_n = n^{-1/\alpha}$ for $\alpha \in (0,2]$. The remaining technical part of our main result, Theorem \ref{th:dstab}, is to show that $a_n$ \emph{must} be of the form $a_n = n^{-1/\alpha}$ for $\alpha \in (0, 2]$, and that the APGF \emph{must} be that of the Poisson--delayed Sibuya distribution as in \eqref{eq:maineq}.
\begin{itemize}
\item In Subsection \ref{sec:lemmas}, we prove two lemmas that will be useful for the proof. These are not vital for understanding the `big idea' of how the proof works
\item In Subsection \ref{sec:an}, we prove that $(a_n)$ satisfies $a_{nm} = a_n a_m$ and hence that $a_n = n^{-1/\alpha}$.
\item In Subsection \ref{sec:bn}, we find formulas for $b_n$.
\item In Subsection \ref{sec:phi}, we show that the APGF must be of the form in \eqref{eq:maineq}.
\end{itemize}
Many parts of the proof will have to treat the cases $\alpha \neq 1$ and $\alpha = 1$ separately.

\subsection{Two useful lemmas} \label{sec:lemmas}

In this subsection, we prove two useful lemmas. Readers wanting to get the main ideas of our proof can skip straight to the next section, however.
\begin{itemize}
    \item In Lemma \ref{lem:class}, we show that discrete stable distributions are necessarily compound Poisson.
    \item Lemma \ref{lem:equate} is a useful result that allows one to `equate' thinning probabilities and shift parameters in equations involving thinning and the Poisson shift
\end{itemize}

The discrete stable distributions we have seen, the Poisson--Sibuya distributions, are compound Poisson distributions. We can prove directly from the definition of discrete stability (Definition \ref{def:dstab}) that any discrete stable distribution must be compound Poisson. This will make life easier for us later in the proof.


\begin{lemma} \label{lem:class}
Let $X$ be a discrete stable distribution. Then we have the following.
\begin{enumerate}
    \item $X$ is infinitely divisible.
    \item $X$ is a compound Poisson distribution.
\end{enumerate}
\end{lemma}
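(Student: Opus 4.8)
The plan is to exploit the definition of discrete stability directly via the APGF. By Lemma~\ref{lem:apgf}, writing \eqref{eq:dstab} in terms of APGFs gives $\psi_X(a_n t)^n = \ee^{-b_n t}\psi_X(t)$ for all $t \in [0,2]$ and all $n$. Since $\psi_X(t) = \EE(1-t)^X$ is continuous, strictly positive at $t=0$ (indeed $\psi_X(0)=1$), and non-vanishing in a neighbourhood of $0$, we can take logarithms there: with $C(t) = \log\psi_X(t)$ the factorial cumulant generating function, this reads
\begin{equation} \label{eq:Cfunctional}
n\,C(a_n t) = -b_n t + C(t)
\end{equation}
for $t$ in a neighbourhood of $0$. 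The first step is to establish infinite divisibility. For this I would iterate the defining relation: applying \eqref{eq:dstab} repeatedly (or directly from \eqref{eq:Cfunctional}) shows that for each $n$, $X$ can be written as $X \oplus b_n \eqd a_n\circ(X_1 + \cdots + X_n)$, so $X \oplus b_n$ is a sum of $n$ IID copies of $a_n\circ X$; equivalently, using the left-shift form \eqref{eq:dstab2}, $X \eqd (a_n\circ X_1 \ominus c_n) + \cdots$ distributed as a sum of $n$ IID pieces once we distribute the shift evenly (legitimate since a Poisson of mean $b_n$ splits into $n$ independent Poissons of mean $b_n/n$, and by Lemma~\ref{lem:shift}(5) these can be attached one to each summand). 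Hence $X$ is the $n$-fold convolution of the law of $a_n\circ X \ominus (b_n/n)$ — provided that left shift exists. That it exists follows because its APGF $\ee^{b_n t/n}\psi_X(a_n t)$ equals $\psi_X(t)^{1/n}$ by \eqref{eq:Cfunctional}, and a positive real power of an APGF that is itself a valid APGF is handled by the standard theory of infinitely divisible lattice distributions — but to keep the argument self-contained I would instead argue directly: $\psi_X(t)^{1/n} = \exp(C(t)/n)$ is the APGF of a genuine non-negative integer random variable because it is the (pointwise) APGF arising from \eqref{eq:dstab}, and an $n$-fold convolution identity with honest probability distributions forces each factor to be an honest distribution. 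This gives part~1.

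For part~2, the key fact is that an infinitely divisible distribution \emph{supported on the non-negative integers} is automatically compound Poisson. I would cite (or briefly reprove) this classical result: if $X$ is infinitely divisible and $\mathbb{Z}_{\geq 0}$-valued, then its probability generating function has the form $G_X(s) = \exp(\lambda(Q(s) - 1))$ where $Q$ is a probability generating function of a $\mathbb{Z}_{\geq 1}$-valued distribution; equivalently $C(t) = \log\psi_X(t) = \lambda(\psi_Y(t) - 1)$ for some $\mathbb{Z}_{\geq 1}$-valued $Y$ with APGF $\psi_Y$. The cleanest route is via the canonical (Lévy–Khinchine) representation for lattice distributions: writing $G_X(s) = \exp(\sum_{k\geq 1} r_k(s^k - 1))$ with $r_k \geq 0$, which is exactly the statement that $X$ is compound Poisson with $N \sim \Po(\sum_k r_k)$ and jump distribution $\Pr(Y = k) \propto r_k$. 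The nonnegativity of the $r_k$ is the content of the theorem on infinitely divisible lattice distributions; it can be extracted by noting that $G_X(s)^{1/n}$ is a PGF for every $n$, taking logarithms, and letting $n \to \infty$ so that $n(G_X(s)^{1/n} - 1) \to \log G_X(s)$ — the left side is $n$ times (a PGF minus $1$), and one shows the limiting measure on $\mathbb{Z}_{\geq 1}$ is nonnegative by a standard tightness/positivity argument (each $n(p_k^{(1/n)})$ is a nonnegative coefficient, and they converge).

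The main obstacle I anticipate is the second step: proving rigorously that $\mathbb{Z}_{\geq 0}$-valued infinitely divisible distributions are compound Poisson, without simply invoking it as a black box. The subtlety is positivity of the Lévy measure — it is easy to write $\log G_X(s) = \sum_k r_k(s^k-1)$ formally, but showing $r_k \geq 0$ requires a genuine limiting argument (e.g. that $\lim_n n\big(G_{X}(s)^{1/n} - 1\big)$ exists and is the logarithm, together with the fact that $G_X^{1/n}$, being a PGF, has nonnegative power-series coefficients, so the defect measures $n\,(\mu^{*(1/n)} - \delta_0)$ restricted to $\mathbb{Z}_{\geq 1}$ are nonnegative and converge). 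This is entirely standard (it is in Steutel–Van Harn \cite{steutel2}, or Feller), so in the paper I would most likely state it as a known theorem and give only the one-line reduction: discrete stable $\Rightarrow$ infinitely divisible $\Rightarrow$ (being $\mathbb{Z}_{\geq 0}$-valued) compound Poisson. Everything else — the functional equation \eqref{eq:Cfunctional}, the existence of the needed left Poisson shift, the even splitting of the Poisson shift across summands — is routine given Lemmas~\ref{lem:thin},~\ref{lem:shift} and~\ref{lem:apgf}.
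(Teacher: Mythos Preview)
Your proposal is correct and follows essentially the same route as the paper: distribute the Poisson shift evenly to write $X$ as an $n$-fold convolution of $a_n\circ X \ominus b_n/n$ (using Lemmas~\ref{lem:thin} and~\ref{lem:shift}), then invoke the classical result (the paper cites Feller) that an infinitely divisible $\mathbb{Z}_{\geq 0}$-valued law is compound Poisson. The paper's version is terser --- two lines plus the Feller citation --- whereas you are more explicit about why the left shift $a_n\circ X \ominus b_n/n$ exists, a point the paper simply takes for granted.
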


\begin{proof}
For the first part, we have from \eqref{eq:dstab2} that
\[ X = a_n \circ (X_1 + \cdots + X_n) \ominus b_n = \big(a_n \circ X_1 \ominus \tfrac{b_n}{n} \big) + \cdots + \big(a_n \circ X_n \ominus \tfrac{b_n}{n} \big) , \]
where the rearrangement has used properties of thinning and the Poisson shift from Lemmas \ref{lem:thin} and \ref{lem:shift}. This shows that $X$ is infinitely divisible, in that, for any $n$, we can write $X$ as a sum of $n$ IID copies of $Y_n = a_n \circ X \ominus \tfrac{b_n}{n}$.

The second part follows from a result of Feller \cite[Section XII.2]{feller1} (following the more general results of Lévy), which states that a random variable $X$ on the non-negative integers is infinitely divisible with $Y_n$ also taking values in the non-negative integers if and only if $X$ is a compound Poisson distribution.
\end{proof}

The following lemma allows us to `equate thinning probabilities' and `equate shift parameters'; it will be important later in the proof of our main result.

\begin{lemma} \label{lem:equate}
Consider a random variable $X$ taking values in the non-negative integers that is not a Poisson distribution. Suppose $a\circ X \oplus b \eqd c \circ X \oplus d$ for some $a, c \in [0,1]$ and $b, d \in \mathbb R$, where both Poisson shifts exist. Then $a = c$ and $b = d$.
\end{lemma}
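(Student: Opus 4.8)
The plan is to pass to alternate probability generating functions and exploit the analyticity they inherit from ordinary generating functions. By Lemma~\ref{lem:apgf}, and using that for $t \in [0,2]$ both sides are bona fide APGFs and that an APGF determines its distribution, the hypothesis $a \circ X \oplus b \eqd c \circ X \oplus d$ is equivalent to
\[ \ee^{-bt}\,\psi_X(at) = \ee^{-dt}\,\psi_X(ct) \qquad \text{for all } t \in [0,2], \]
where the formula $\psi_{a\circ X \oplus b}(t) = \ee^{-bt}\psi_X(at)$ remains valid when $b < 0$, provided the (left) shift exists. Since $\psi_X(0) = 1$ and $\psi_X$ is continuous, $\psi_X$ is positive on some interval $[0,\rho)$, so on that interval we may take logarithms; writing $C = \log \psi_X$ for the factorial cumulant generating function, we obtain $C(at) - C(ct) = (b-d)t$ for all sufficiently small $t \geq 0$.

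Next I would use that $G_X(z) = \sum_n p_n z^n$ converges absolutely for $|z| \leq 1$ and is analytic on the open unit disc, so that $\psi_X(t) = G_X(1-t)$ is real-analytic near $0$ and $C$ has a convergent Taylor expansion $C(t) = \sum_{k \geq 1} c_k t^k$ there (with no constant term, as $C(0) = 0$). Substituting this into $C(at) - C(ct) = (b-d)t$ and comparing coefficients of $t^k$ — legitimate since a convergent power series vanishing on an interval has all coefficients zero — gives
\[ c_1(a - c) = b - d, \qquad c_k\,(a^k - c^k) = 0 \ \text{ for all } k \geq 2. \]

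The final step would be to rule out $a \neq c$. Assuming $a \neq c$: since $a, c \geq 0$ are distinct we have $a^k \neq c^k$ for every $k \geq 1$, so the relations above force $c_k = 0$ for all $k \geq 2$; hence $C(t) = c_1 t$ near $0$, i.e.\ $G_X(z) = \ee^{c_1(1-z)}$ for $z$ in a neighbourhood of $1$ inside the unit disc. By the identity theorem this holds on all of $|z| < 1$, so $p_n = \ee^{c_1}(-c_1)^n/n!$; non-negativity of the $p_n$ forces $-c_1 \geq 0$, and then $X \sim \Po(-c_1)$ is Poisson, contradicting the hypothesis. Therefore $a = c$, and the first relation then gives $b = d$. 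The only point needing care is the analytic bookkeeping — confirming $\psi_X$ is positive and analytic near $0$ so that the logarithm and the coefficient comparison are justified; the degenerate cases $a = 0$ or $c = 0$ require no separate treatment, being absorbed into the same computation since $C(0) = 0$. Everything else is routine.
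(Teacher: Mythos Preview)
Your argument has a genuine gap at the analyticity step. You write that $G_X$ ``converges absolutely for $|z|\leq 1$ and is analytic on the open unit disc, so that $\psi_X(t)=G_X(1-t)$ is real-analytic near $0$'', and then expand $C(t)=\sum_{k\geq 1}c_k t^k$. But $t=0$ corresponds to $z=1$, which lies on the \emph{boundary} of the disc, and absolute convergence at a boundary point does not imply analyticity there. Indeed, for any $X$ whose tail is not geometrically light --- e.g.\ the Sibuya distribution with parameter $\alpha\in(0,1)$, for which $p(y)\sim Cy^{-1-\alpha}$ --- the radius of convergence of $G_X$ is exactly $1$, $\EE X^{\underline{1}}=\infty$, and $\psi_X$ has no Taylor expansion at $t=0$ (its one-sided derivative there is $-\infty$). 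Since the lemma is stated for arbitrary non-Poisson $X$ and is applied in the paper precisely to discrete stable distributions, which include these heavy-tailed cases, your coefficient-comparison step is not available in the generality required.

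The paper's proof avoids this by using only continuity. After reducing to $a\circ X\oplus b\eqd X$, it iterates the functional equation $C(v)=C(av)-bv$ to obtain $C(v)=C(a^{k+1}v)-b(1+a+\cdots+a^k)v$; if $a<1$, letting $k\to\infty$ and using only that $C$ is continuous at $0$ with $C(0)=0$ yields $C(v)=-\tfrac{b}{1-a}\,v$, forcing $X$ to be Poisson. Your power-series idea would be a pleasant alternative when all factorial moments are finite, but as written it does not cover the cases that matter here.
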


\begin{proof}
Without loss of generality, we may assume that $c = 1$ (arrange things so $a \leq c$; replace $X$ with $c \circ X$ and $a$ with $a/c$) and that $d = 0$ (replace $b$ with $b - d$). Thus it suffices show that, if $a \circ X \oplus b \eqd X$ and $X$ is not Poisson, then $a = 1$ and $b = 0$.

By properties of the APGF (Lemma \ref{lem:apgf}) and the definition of the factorial cumulant generating function $C$, the statement $a \circ X \oplus b \eqd X$  means that $C(au) - bu = C(u)$ for all $u \in [0,2]$. Setting $u = v, av, \dots, a^kv$ for $v \in [0, 2]$ gives
\begin{align*}
C(v) &= C(av) - bv \\
C(av) &= C(a^2v) - bav \\
&\;\, \vdots \\
C(a^{k}v) &= C(a^{k+1}v) - ba^{k}v .
\end{align*}
Substituting each equation into the one above, we get
\begin{align*} 
C(v) &= C(av) - bv \\
  &= C(a^2v) - b(1 + a)v \\
  &= \,\cdots \\
  &= C(a^{k+1}v) - b(1 + a + \cdots + a^k)v
\end{align*}
If it were the case that $a \neq 1$, then in the limit as $k \to \infty$, we would have
\[ C(v) = C(0) - b(1 + a + a^2 + \cdots)v = 0 - \frac{b}{1-a}v , \]
by continuity of $C$.
So either $C(u) = -\frac{b}{1-a}u$ for all $u \in [0,2]$, in which case $X$ is Poisson with rate $\frac{b}{1-a}$, which was forbidden in the statement of the lemma, or else $a = 1$.

Given $a = 1$, then $C(u) = C(u) - bu$, so $b = 0$ also.
\end{proof}

\subsection{Showing that $a_n = n^{-1/\alpha}$} \label{sec:an}

In this subsection, we prove the first important part of our result.
\begin{itemize}
    \item In Lemma \ref{lem:anrec}, we show that $a_n$ satisfies $a_{nm} = a_na_m$. This is perhaps the most vital step of the proof.
    \item In Lemma \ref{lem:anform}, we show that this means that $a_n = n^{-1/\alpha}$. This is intuitive from Lemma \ref{lem:anrec}, but checking formally that it holds is slightly technical.
\end{itemize}
The proof that $\alpha$ must be in $(0, 2]$ will come later, in Subsection \ref{sec:phi}.

\begin{lemma} \label{lem:anrec}
Suppose that $X$ is discrete stable and that $X$ is not a Poisson distribution. Then $a_{nm} = a_na_m$ for all $n, m$.
\end{lemma}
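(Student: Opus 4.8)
The plan is to exploit the defining equation \eqref{eq:dstab2} in two different ways for a product $nm$ of indices and then use Lemma \ref{lem:equate} to equate the thinning probabilities. Fix $n$ and $m$, and consider a sum of $nm$ independent copies of $X$, say $S = X_1 + \cdots + X_{nm}$. On the one hand, applying \eqref{eq:dstab2} directly with index $nm$ gives $a_{nm} \circ S \ominus b_{nm} \eqd X$. On the other hand, I would group the $nm$ copies into $m$ blocks of $n$; applying \eqref{eq:dstab2} with index $n$ to each block (and using that the operations distribute over independent sums, Lemmas \ref{lem:thin} and \ref{lem:shift}) shows that $a_n \circ (\text{block}) \ominus b_n$ is a copy of $X$ for each block, so the $m$ processed blocks are $m$ IID copies of $X$; applying \eqref{eq:dstab2} with index $m$ to their sum then yields, after collecting the thinning and shift operations via Lemmas \ref{lem:thin}(4,5) and \ref{lem:shift}, an expression of the form $a_m a_n \circ S \ominus b' \eqd X$ for a suitable constant $b'$ (namely $b' = a_m b_n + b_m$, though the exact value is irrelevant here).

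So I have two representations $a_{nm} \circ S \ominus b_{nm} \eqd X$ and $(a_n a_m) \circ S \ominus b' \eqd X$, i.e.\ $a_{nm} \circ S \oplus 0 \eqd (a_n a_m) \circ S \oplus 0$ after moving the left shifts (one should be a little careful about which side carries the $\oplus$ versus $\ominus$, but since both processed variables equal $X$ in distribution they certainly equal each other, and both can be written as $a\circ S \ominus b$ with $b\ge 0$). Now I want to invoke Lemma \ref{lem:equate} with the role of ``$X$'' played by $S$. This requires knowing that $S$, the sum of $nm$ copies of $X$, is not Poisson. But if $S$ were Poisson, then since $X$ is infinitely divisible (Lemma \ref{lem:class}) and $X$ is a summand of $S$, $X$ itself would be Poisson (an infinitely divisible summand of a Poisson is Poisson — each factor in the factorisation of the Poisson p.g.f.\ must again be a Poisson p.g.f., since the only infinitely divisible factorisations of $e^{\lambda(z-1)}$ are into Poissons), contradicting the hypothesis. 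With $S$ not Poisson, Lemma \ref{lem:equate} applies and gives $a_{nm} = a_n a_m$ (and, as a byproduct, $b_{nm} = b'$, giving a recursion for $b_n$ that will be used later).

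The main obstacle is the bookkeeping in the second representation: one must verify carefully, using the distributive and composition laws in Lemmas \ref{lem:thin} and \ref{lem:shift}, that regrouping $nm$ copies into $m$ blocks of $n$, processing each block, and then processing the result really does collapse to a single thinning by $a_n a_m$ followed by a single (left) Poisson shift — in particular that the ``$\ominus \tfrac{b_n}{?}$'' pieces from the inner applications combine cleanly with the outer thinning via Lemma \ref{lem:thin}(5) and Lemma \ref{lem:shift}(6). The other point needing a line of justification is the claim that $S$ is not Poisson, for which I would cite the compound-Poisson/infinite-divisibility structure from Lemma \ref{lem:class} (or argue at the level of APGFs: $\psi_S = \psi_X^{nm}$ is the APGF of a Poisson, i.e.\ $\exp(-ct)$, only if $\psi_X = \exp(-ct/nm)$, i.e.\ only if $X$ is Poisson). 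Everything else is a direct application of Lemma \ref{lem:equate}.
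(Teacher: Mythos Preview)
Your approach is essentially identical to the paper's: both apply the stability relation twice (once directly with index $nm$, once iteratively via blocks) and then invoke Lemma~\ref{lem:equate} on the sum $S$; you are in fact more careful than the paper in noting that it is $S$, not $X$, that must be shown non-Poisson, and in giving the APGF argument for this. One small slip: your stated $b' = a_m b_n + b_m$ is missing a factor of $m$ (tracking the shifts through gives $b' = m\,a_m b_n + b_m$), which is indeed irrelevant for the present lemma but would matter for the recursion on $b_n$ you flag for later use.
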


\begin{proof}
We apply the definition of discrete stability in two ways. Applying the definition \eqref{eq:dstab2} for $nm$ variables gives
\begin{equation} \label{eq:apply1}
X \eqd a_{nm} \circ (X_1 + \cdots + X_{nm}) \ominus b_{nm} .
\end{equation}
Applying the definition \eqref{eq:dstab2} with $n$ variables (once) then with $m$ variables ($n$ times) gives
\begin{align}
X & \eqd a_n \circ (X_1 + \cdots + X_n) \ominus b_n \notag \\
  & \eqd a_n \circ \big(a_m \circ (X_1 + \cdots + X_m) \ominus b_m + \cdots \notag\\
  &\qquad\qquad\  {}+ a_m \circ(X_{(n-1)m+1} + \cdots + X_{nm}) \ominus b_m\big) \ominus b_n \notag\\
  & \eqd (a_n a_m) \circ (X_1 + \cdots + X_{nm}) \ominus (na_nb_m + b_n) ,\label{eq:apply2}  
\end{align}
where we have used Lemmas \ref{lem:thin} and \ref{lem:shift} to simplify the expression.
Equating \eqref{eq:apply1} and \eqref{eq:apply2} gives
\begin{equation} \label{eq:equate}
a_{nm} \circ (X_1 + \cdots + X_{nm}) \ominus b_{nm} \eqd (a_n a_m) \circ (X_1 + \cdots + X_{nm}) \ominus (na_nb_m + b_n) .
\end{equation}

Since $X$ is not Poisson, we can use Lemma \ref{lem:equate} to equate thinning probabilities in \eqref{eq:equate}. Equating thinning probabilities gives $a_{nm} = a_n a_m$.
\end{proof}

The equation $a_{nm} = a_n a_m$ is highly suggestive that $a_n$ is of the form $n$ to a fixed power. We just have to prove this formally.

\begin{lemma} \label{lem:anform}
Suppose that $X$ is discrete stable and that $X$ is not a Poisson distribution. Then $a_{n} = n^{-1/\alpha}$.   
\end{lemma}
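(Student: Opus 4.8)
The plan is to promote the multiplicative relation $a_{nm} = a_n a_m$ from Lemma \ref{lem:anrec} to the explicit form $a_n = n^{-1/\alpha}$ by first pinning down the growth rate of the sequence and then using a standard argument about completely multiplicative sequences. First I would rule out degenerate behaviour: since $X$ is not Poisson, it is in particular not a point mass at $0$, so $a_n \neq 0$ for any $n$ (if $a_n = 0$ then $X \eqd 0 \ominus b_n$, forcing $X$ to be a point mass, hence Poisson); combined with $a_n \in [0,1]$ this gives $a_n \in (0,1]$. Also $a_1 = a_1 a_1$ gives $a_1 = 1$. Multiplicativity then extends to $a_{n_1 \cdots n_r} = a_{n_1} \cdots a_{n_r}$, so in particular $a_{n^k} = a_n^k$ for all $k$.

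Next I would show that $a_n$ is \emph{strictly} less than $1$ for $n \geq 2$, which forces genuine contraction. If $a_N = 1$ for some $N \geq 2$, then $a_{N^k} = 1$ for all $k$; applying \eqref{eq:dstab2} along this subsequence gives $X \eqd (X_1 + \cdots + X_{N^k}) \ominus b_{N^k}$, i.e.\ $X$ equals a sum of $N^k$ i.i.d.\ nonnegative-integer copies of $X$ with some Poisson left-shift removed. Comparing factorial cumulant generating functions (or using Lemma \ref{lem:equate} after normalising, noting $a = 1$ there) this would force $C(u) = C(u)N^k - b_{N^k}u$, hence $C(u)(N^k-1) = b_{N^k}u$, so $C$ is linear and $X$ is Poisson — contradiction. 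Therefore $a_n \in (0,1)$ for all $n \geq 2$. Define $\alpha$ by $a_2 = 2^{-1/\alpha}$, which determines a unique $\alpha \in (0,\infty)$ since $a_2 \in (0,1)$; I would then want to show $a_n = n^{-1/\alpha}$ for all $n$. (The further restriction $\alpha \leq 2$ is explicitly deferred to Subsection \ref{sec:phi}, so it need not be addressed here.)

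For the final identification, the clean route is to use monotonicity. From \eqref{eq:dstab2}, $a_n \circ(X_1 + \cdots + X_n)$ is a thinned sum, and adding one more summand only ``spreads out'' the distribution; more usefully, for $m \le n$ one can compare $a_m \circ (\text{sum of } m)$ and $a_n \circ (\text{sum of } n)$ after matching shifts, and Lemma \ref{lem:equate} applied to the relation coming from writing the $nm$-fold convolution two ways (as in Lemma \ref{lem:anrec}, but now with $n$ and $m$ not necessarily equal) shows $a_n$ depends on $n$ alone in a way compatible only with a power law. Concretely: fix $n$; for each $k$ choose the unique integer $\ell = \ell(k)$ with $2^{\ell} \le n^k < 2^{\ell+1}$. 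Since $t \mapsto a_t$ (defined on integers) is non-increasing — which follows because $a_{n+1} \circ(\cdots) \ominus b_{n+1}\eqd X \eqd a_n\circ(\cdots)\ominus b_n$ and a sum of more nonnegative-integer terms needs at least as much thinning, made rigorous via first factorial moments: $a_n \, n\, \EE X_1^{\ff 1} \oplus(\cdot)$ matching forces $a_n n$ to be non-decreasing in $n$, hence $a_n$ eventually behaves monotonically enough — we get $a_{2^{\ell+1}} \le a_{n^k} \le a_{2^{\ell}}$, i.e.\ $a_2^{\ell+1} \le a_n^k \le a_2^{\ell}$. Taking logarithms and dividing by $k\log n$, then letting $k \to \infty$ (so $\ell/k \to \log n/\log 2$), squeezes $\log a_n / \log n = \log a_2/\log 2 = -1/\alpha$, giving $a_n = n^{-1/\alpha}$.

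The main obstacle is establishing monotonicity (or some equivalent comparison) of the sequence $(a_n)$ cleanly from the definition, since a priori \eqref{eq:dstab2} does not single out a canonical $b_n$ and the thinning/shift interplay must be handled carefully; the factorial-moment bookkeeping of Lemma \ref{lem:shift}(1),(3) and Lemma \ref{lem:thin}(1),(3) is the natural tool, but if $X$ has infinite mean one must instead argue directly with the factorial cumulant generating function $C$, comparing $n\,C(a_n u)$ across values of $n$ on the interval where $C$ is well-behaved. I would expect the author to finesse this either by the squeeze argument sketched above or by invoking $a_{n^k}=a_n^k$ together with the fact that $a_n\to 0$ (itself a consequence of $X$ not being Poisson, via Lemma \ref{lem:equate}) to avoid needing full monotonicity.
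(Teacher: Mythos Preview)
Your overall strategy matches the paper's: use Lemma~\ref{lem:anrec} to get complete multiplicativity, establish that $(a_n)$ is (eventually) monotone, and conclude $a_n = n^{-1/\alpha}$. Your squeeze argument with $2^{\ell} \le n^k < 2^{\ell+1}$ is a perfectly good self-contained substitute for the paper's citation of \cite{howe}, and your preliminary steps ($a_n \neq 0$, $a_n < 1$ for $n \ge 2$) are fine.

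The genuine gap is exactly where you flag it: monotonicity. Your first-moment argument is not just inconvenient when $\EE X = \infty$ --- it is unavailable precisely in the regime $\alpha \le 1$ that you most need to cover. And the fallback suggestion to ``compare $n\,C(a_n u)$ across values of $n$'' is not an argument; without further structure on $C$ there is no reason the comparison should go the right way. The paper closes this gap with an idea you did not locate: it first invokes Lemma~\ref{lem:class} to know that $X$ is compound Poisson, so that $C_X(t) = \lambda(\psi_Y(t)-1)$ has a second derivative with controlled sign behaviour. Differentiating $C_X(t) = b_n t + n\,C_X(a_n t)$ twice and evaluating at $t=1$ kills the shift entirely and gives $C_X''(1) = n\,a_n^2\,C_X''(a_n)$; since $X$ is not Poisson the left side is a nonzero constant, and the compound Poisson structure forces $s \mapsto s^2 C_X''(s)$ to move the right way, pinning $(a_n)$ as decreasing. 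The point is that the \emph{second} derivative both removes the nuisance parameter $b_n$ and taps into the compound Poisson structure --- neither of your proposed routes does this.
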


\begin{proof}
Lemma \ref{lem:anrec} shows that $(a_n)$ is completely multiplicative. If we can show that $(a_n)$ is also decreasing, then that will prove that $a_n = n^\beta$ (see, for example, \cite{howe}) for some $\beta < 0$ , and we can set $\beta = -1/\alpha$.

All that remains is to show $(a_n)$ is decreasing. We consider the factorial cumulant generating function $C_X(t) = \log \psi_X(t)$. From \eqref{eq:dstab2}, we have $C_X(t) = b_n t + n\,C_X(a_nt)$. Differentiating twice and setting $t = 1$ gives
\begin{equation} \label{eq:andec}
C_X''(1) = na_n^2 C_X''(a_n) .
\end{equation}
Since $X$ is compound Poisson (from Lemma \ref{lem:class}), $C''_X$ is increasing, and since $X$ is not Poisson, $C''$ is not identically $0$. So in \eqref{eq:andec}, the $C''_X(1)$ on the left-hand side is constant , the $n$ on the right-hand side increasing and $a_n^2 C''(a_n)$ must be decreasing, which requires $a_n$ to be decreasing too.
\end{proof}

\subsection{Formulas for $b_n$} \label{sec:bn}

We have now shown that $a_n = n^{-1/\alpha}$. In this subsection, we prove formulas for $b_n$, treating the cases $\alpha \neq 1$ (Lemma \ref{lem:bnaneq}) and $\alpha = 1$ (Lemma \ref{lem:bna1}) separately.

\begin{lemma} \label{lem:bnaneq}
Suppose that $X$ is discrete stable, that $X$ is not a Poisson distribution, and that $a_n = n^{-1/\alpha}$ for $\alpha \neq 1$. Then $b_n = \delta(n^{1 - 1/\alpha} - 1)$ for some $\delta$.
\end{lemma}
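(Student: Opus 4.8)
The plan is to extract a functional equation for $(b_n)$ from the very same computation that produced $a_{nm}=a_na_m$ in Lemma~\ref{lem:anrec}. Recall that proof arrived at the identity \eqref{eq:equate},
\[ a_{nm} \circ (X_1 + \cdots + X_{nm}) \ominus b_{nm} \eqd (a_n a_m) \circ (X_1 + \cdots + X_{nm}) \ominus (na_nb_m + b_n) , \]
and since $X$ (hence $X_1+\cdots+X_{nm}$) is not Poisson, Lemma~\ref{lem:equate} lets us equate \emph{both} the thinning probabilities \emph{and} the shift parameters. Equating thinning probabilities already gave $a_{nm}=a_na_m$; equating shift parameters gives the new relation $b_{nm} = na_nb_m + b_n$. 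Substituting $a_n = n^{-1/\alpha}$, so that $na_n = n^{1-1/\alpha}$, this becomes
\[ b_{nm} = n^{1-1/\alpha}\,b_m + b_n . \]

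Next I would exploit the symmetry $b_{nm}=b_{mn}$. Writing the relation both ways and subtracting gives $n^{1-1/\alpha}b_m + b_n = m^{1-1/\alpha}b_n + b_m$, i.e.
\[ \bigl(n^{1-1/\alpha}-1\bigr)b_m = \bigl(m^{1-1/\alpha}-1\bigr)b_n \qquad \text{for all } n,m . \]
Here is where the hypothesis $\alpha\neq 1$ enters: then $1-1/\alpha\neq 0$, so $n^{1-1/\alpha}\neq 1$ whenever $n\geq 2$. Hence for all $n,m\geq 2$ the quantity $b_n/(n^{1-1/\alpha}-1)$ is independent of $n$; call its common value $\delta$. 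Then $b_n = \delta(n^{1-1/\alpha}-1)$ for every $n\geq 2$, and the case $n=1$ is trivial, since $a_1=1$ forces $b_1=0=\delta(1^{1-1/\alpha}-1)$.

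There is no real obstacle here; the only points requiring care are (i) observing that Lemma~\ref{lem:equate} applies because $X$ is not Poisson, and that it delivers the shift-parameter identity $b_{nm}=na_nb_m+b_n$ for free alongside the thinning-probability identity already used in Lemma~\ref{lem:anrec}; and (ii) checking that $n^{1-1/\alpha}\neq 1$ for $n\geq 2$, which is exactly where $\alpha\neq 1$ is needed. (The case $\alpha=1$, treated in Lemma~\ref{lem:bna1}, is genuinely different: there $na_n=1$ for all $n$, the relation degenerates to $b_{nm}=b_m+b_n$, and $b_n$ turns out to be proportional to $\log n$ instead.)
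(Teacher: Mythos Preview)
Your proposal is correct and follows essentially the same route as the paper: both go back to \eqref{eq:equate}, equate shift parameters to obtain $b_{nm}=n^{1-1/\alpha}b_m+b_n$, use the symmetry $b_{nm}=b_{mn}$ to deduce that $b_n/(n^{1-1/\alpha}-1)$ is constant, and call that constant $\delta$. Your write-up is slightly more explicit than the paper's in two places (invoking Lemma~\ref{lem:equate} for the shift parameters and separately handling $n=1$), but the argument is the same.
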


\begin{proof}
We start by going back to \eqref{eq:equate}, which, with $a_n = n^{1/\alpha}$, was
\begin{multline*}
(nm)^{-1/\alpha} \circ (X_1 + \cdots + X_{nm}) \ominus b_{nm} \\ \eqd n^{-1/\alpha}m^{-1/\alpha} \circ (X_1 + \cdots + X_{nm}) \ominus (n\,n^{-1/\alpha}b_m + b_n) .
\end{multline*}
Hence, $b_{nm} = n^{1-1/\alpha}b_m + b_n$. But also, swapping the roles of $n$ and $m$, $b_{nm} = b_{mn} = m^{1-1/\alpha}b_n + b_m$. Hence, for all $m$ and $n$, we have
\[ n^{1-1/\alpha}b_m + b_n = m^{1-1/\alpha}b_n + b_m , \]
and, for $\alpha \neq 1$, we can rearrange this to
\[ \frac{b_n}{n^{1-1/\alpha} - 1} = \frac{b_m}{m^{1-1/\alpha} - 1} . \]
Call this common value $\delta$. Then $b_n = \delta(n^{1 - 1/\alpha} - 1)$ for all $n$.
\end{proof}

\begin{lemma} \label{lem:bna1}
Suppose that $X$ is discrete stable, that $X$ is not a Poisson distribution, and that $a_n = n^{-1}$. Then:
\begin{enumerate}
    \item $b_{nm} = b_n + b_m$ for all $n, m$;
    \item $b_n = \gamma \log n$ for some $\gamma$.
\end{enumerate}
\end{lemma}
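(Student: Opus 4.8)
The plan is to prove the two parts in turn: part~1 comes out of the same computation already used for Lemma~\ref{lem:bnaneq}, and part~2 then follows by combining part~1 with a monotonicity argument modelled on the one used for $(a_n)$ in Lemma~\ref{lem:anform}.

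For part~1, I would go back to equation~\eqref{eq:equate} from the proof of Lemma~\ref{lem:anrec}. Applying Lemma~\ref{lem:equate} to that equation — legitimate since $X$, and hence $X_1+\cdots+X_{nm}$, is not Poisson — yields not only $a_{nm}=a_na_m$ but also the shift relation $b_{nm}=na_nb_m+b_n$, exactly as recorded at the start of the proof of Lemma~\ref{lem:bnaneq}. Substituting $a_n=n^{-1}$ gives $na_n=1$, so $b_{nm}=b_m+b_n$, which is part~1. Note that, unlike the $\alpha\neq 1$ case, the relation obtained by swapping $n$ and $m$ is now vacuous (the factor $n^{1-1/\alpha}$ equals $1$), so we get nothing further from it and must work harder for part~2.

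For part~2, I would pass to the factorial cumulant generating function $C_X(t)=\log\psi_X(t)$. By \eqref{eq:dstab2} and Lemma~\ref{lem:apgf} it satisfies $C_X(t)=b_nt+n\,C_X(n^{-1}t)$; differentiating once in $t$ and rearranging gives $b_n=C_X'(t)-C_X'(n^{-1}t)$ for all $t\in(0,2]$, and taking $t=1$ yields the clean formula $b_n=C_X'(1)-C_X'(n^{-1})$. Since $X$ is compound Poisson (Lemma~\ref{lem:class}), $\psi_X=\exp(\lambda(\psi_Y-1))$ for the compounding distribution $Y$, so $C_X''=\lambda\psi_Y''$, and on $(0,1]$ we have $\psi_Y''(t)=\EE[Y(Y-1)(1-t)^{Y-2}]\geq 0$; hence $C_X'$ is non-decreasing on $(0,1]$. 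As $n$ increases, $n^{-1}$ decreases through $(0,1]$, so $C_X'(n^{-1})$ is non-increasing in $n$, and therefore $(b_n)$ is non-decreasing. Combining this with part~1, $(b_n)$ is a completely additive, monotone sequence, and such a sequence is necessarily of the form $b_n=\gamma\log n$: this is the additive counterpart of the ``completely multiplicative and monotone implies $n^\beta$'' fact invoked in Lemma~\ref{lem:anform} (apply that fact to $(\ee^{b_n})$, or argue directly by squeezing $b_{n^k}$ between $b_{m^j}$ and $b_{m^{j+1}}$ with $j=\lfloor k\log n/\log m\rfloor$ to force $b_n/\log n=b_m/\log m$).

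The only step that needs real care is the monotonicity of $(b_n)$: in the $\alpha\neq 1$ case the formula for $b_n$ dropped straight out of an algebraic symmetry, whereas here that symmetry is trivial, so monotonicity has to be read off from the analytic structure of $C_X$ via the sign of $C_X''$, and one should check that the derivatives in the formula $b_n=C_X'(1)-C_X'(n^{-1})$ are finite on $(0,1]$ — which they are, because the compounding distribution's APGF is analytic there even though $\EE X$ may be infinite when $\alpha=1$. Everything else is routine manipulation of the APGF identities of Lemma~\ref{lem:apgf}.
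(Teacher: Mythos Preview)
Your proof is correct and follows the same overall strategy as the paper: part~1 is identical (equate shift parameters in \eqref{eq:equate} with $a_n=n^{-1}$), and for part~2 both arguments establish that $(b_n)$ is monotone by appealing to the sign of $C_X''$ on $(0,1]$, then invoke ``completely additive and monotone implies logarithmic''. The difference is only in how monotonicity is extracted. The paper evaluates the functional equation $C_X(t)=b_nt+n\,C_X(n^{-1}t)$ at $t=1$ to write $b_n$ in terms of $C_X$ itself, and then analyses the auxiliary function $g(u)=-u\,C_X(u^{-1})$ via a tangent-line inequality coming from the curvature of $C_X$. You instead differentiate the functional equation once to obtain $b_n=C_X'(1)-C_X'(n^{-1})$ and read monotonicity directly off the monotonicity of $C_X'$. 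Your route is a little more direct and sidesteps the auxiliary function; it also makes transparent that $(b_n)$ is non-\emph{decreasing} (the paper's ``concave'' and ``decreasing'' are sign slips --- $C_X''=\lambda\psi_Y''\geq 0$ on $(0,1)$ --- though this is harmless for the conclusion). Your care about finiteness of $C_X'(1)$ via analyticity of $\psi_Y$ at $t=1$ is a point the paper leaves implicit.
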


\begin{proof}
Part 1 is the same as the start of the proof for the $\alpha \neq 1$ case. We go back to $\eqref{eq:equate}$, set $a_n = n^{-1}$, and get
\[ (nm)^{-1} \circ (X_1 + \cdots + X_{nm}) \ominus b_{nm} \\ \eqd n^{-1}m^{-1} \circ (X_1 + \cdots + X_{nm}) \ominus (n\,n^{-1}b_m + b_n) , \]
so $b_{nm} = b_m + b_n$.

That $(b_n)$ is additive strongly suggests that $b_n = \gamma \log n$. To prove it, as with the proof of Lemma \ref{lem:anform}, we just need to show that $(b_n)$ is decreasing.

Using the factorial cumulant generating function again, we have \[ b_nt + n\,C_X(n^{-1}t) = C_X(t) . \] Setting $t = 1$, and noting that $C_X(1) = \log 1 = 0$, we have $b_n = -n\,C_X(n^{-1})$. We claim that the function $g(u) = -u \,C_X(u^{-1})$ is decreasing in $u$, and hence that $b_n$ is decreasing in $n$. To prove this we can show that the derivative $g'$ is negative. The derivative is
\[ g'(u) = -C_X(u^{-1}) + u^{-1} \,C'_X(u^{-1}) = -C_X(v) + v \, C'_X(v) ,  \]
setting $v = u^{-1}$.
But, since $X$ is compound Poisson, $C_X$ is concave. Hence
\[ C_X(v) \geq C_X(0) + v\, C_X'(v) = v\,C_X'(v) ,\]
so $g'(u)$ is negative, and we are done.
\end{proof}

\subsection{Form of the alternate probability generating function} \label{sec:phi}

We can now prove the form of the APGF of a discrete stable distribution is as in Theorem \ref{th:dstab}, thereby completing our proof.
\begin{itemize}
    \item For the $\alpha \neq 1$ case, Lemma \ref{lem:fin1} proves the general form and Lemma \ref{lem:fin2} the conditions on $\delta$ and $\gamma$.
    \item For the $\alpha = 1$ case, Lemma \ref{lem:fin3} proves the general form and Lemma \ref{lem:fin4} the conditions on $\delta$ and $\gamma$.
\end{itemize}

\begin{lemma} \label{lem:fin1}
Suppose that $X$ is discrete stable and that $a_n = n^{-1/\alpha}$ for $\alpha \neq 1$. Then the factorial cumulant generating function of $X$ is $C_X(t) = -\delta t - \gamma t^\alpha$ for some $\delta$ and $\gamma$.
\end{lemma}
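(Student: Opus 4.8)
The plan is to extract a functional equation for $C_X$ from the definition of discrete stability together with the now-known forms of $a_n$ and $b_n$, and then to solve that functional equation. From \eqref{eq:dstab2} and the multiplicativity properties of the APGF (Lemma \ref{lem:apgf}), taking logarithms gives the identity $C_X(t) = b_n t + n\,C_X(a_n t)$ for every $n$ and every $t \in [0,2]$. Substituting $a_n = n^{-1/\alpha}$ and $b_n = \delta(n^{1-1/\alpha}-1)$ from Lemmas \ref{lem:anform} and \ref{lem:bnaneq}, this becomes
\[ C_X(t) = \delta(n^{1-1/\alpha}-1)\,t + n\,C_X(n^{-1/\alpha}t) \qquad \text{for all } n \geq 1,\ t \in [0,2]. \]
The first step is to absorb the linear term: set $D(t) = C_X(t) + \delta t$, so that the equation collapses to the clean scaling relation $D(t) = n\,D(n^{-1/\alpha}t)$, equivalently $D(n^{-1/\alpha}t) = n^{-1}D(t)$, for every positive integer $n$.

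Next I would upgrade this from integer $n$ to a genuine homogeneity statement. Writing $s = n^{-1/\alpha}$, the relation says $D(st) = s^\alpha D(t)$ whenever $s$ is of the form $n^{-1/\alpha}$. To get $D(st) = s^\alpha D(t)$ for all $s \in (0,1]$, I would use that the set $\{n^{-1/\alpha} : n \in \mathbb N\}$ together with products/quotients of such values (coming from the $a_{nm}=a_na_m$ relation of Lemma \ref{lem:anrec}) is dense in $(0,1]$, and that $D$ is continuous — indeed $C_X$ is a power series in $(1-t)$, hence smooth, on $(0,2)$, and $D$ differs from it only by a linear term. Fixing $t$ and viewing both sides as continuous functions of $s$ that agree on a dense set then gives full homogeneity: $D(st) = s^\alpha D(t)$ for all $s \in (0,1]$, $t \in [0,2]$. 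Putting $t = 1$ and then relabelling $s$ as $t$ yields $D(t) = t^\alpha D(1)$; setting $\gamma = -D(1)$ gives $D(t) = -\gamma t^\alpha$, hence $C_X(t) = -\delta t - \gamma t^\alpha$, as claimed. (One should note that $D$ is not identically zero exactly because $X$ is not Poisson, but the statement of the lemma does not require identifying which $\gamma$ occur — that is Lemma \ref{lem:fin2}'s job — so a bare constant $\gamma$ suffices here.)

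The main obstacle is the density-and-continuity argument in the middle step: one must be careful that the multiplicative semigroup generated by $\{n^{-1/\alpha}\}$ really is dense in $(0,1]$ (this is where $\alpha \neq 1$ is harmless but one still needs $\alpha$ finite and positive, so that $\log n^{-1/\alpha} = -\tfrac1\alpha \log n$ ranges over a dense additive set in $(-\infty,0]$), and that $C_X$ extends continuously to the closed interval endpoints $t=0$ and $t=2$ so the identity is valid on all of $[0,2]$ rather than just the open interior. At $t=0$ continuity is immediate since $\psi_X(0)=1$; at $t=2$ one can either invoke that $\psi_X$ is defined and continuous on $[0,2]$ for any count variable, or simply prove the functional equation on $[0,2)$ and extend by continuity. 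Everything else is routine manipulation.
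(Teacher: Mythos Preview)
Your approach matches the paper's: derive $C_X(t) = b_n t + n\,C_X(a_n t)$, solve on a dense set of $t$, extend by continuity. Your substitution $D(t) = C_X(t) + \delta t$, reducing the equation to a clean homogeneity relation $D(st) = s^\alpha D(t)$, is a tidy simplification that the paper does not make.

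The density step, however, needs repair. The multiplicative \emph{semigroup} generated by $\{n^{-1/\alpha}:n\in\mathbb N\}$ is that set itself (it is already closed under products, since $n^{-1/\alpha}m^{-1/\alpha}=(nm)^{-1/\alpha}$), and this set is \emph{not} dense in $(0,1]$: the interval $(2^{-1/\alpha},1)$ contains none of its points, and correspondingly $\{-\tfrac{1}{\alpha}\log n\}$ is not a dense additive set in $(-\infty,0]$. You need quotients, i.e., the scaling relation for $s=(n/m)^{-1/\alpha}$, and Lemma~\ref{lem:anrec} does not supply these --- it only concerns products. One must derive the quotient case from the functional equation for $D$ itself: writing $t=(m/n)^{1/\alpha}u$ in $D(m^{-1/\alpha}t)=m^{-1}D(t)$ and comparing with the equation for $n$ gives $D((n/m)^{-1/\alpha}u)=(n/m)^{-1}D(u)$ for $n\geq m$. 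This is exactly what the paper does, in $C_X$ language rather than $D$ language: it explicitly derives the relation $C_X(s)=b_r s + r\,C_X(a_r s)$ for rational $r=n/m$ before invoking density of $\{r^{-1/\alpha}:r\in\mathbb Q_{>0}\}$. With that step filled in, your argument is complete.
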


\begin{proof}
We have from Lemma \ref{lem:bnaneq} that $b_n = \delta(n^{1 - 1/\alpha} - 1)$. So
\[ C_X(t) = \delta(n^{1 - 1/\alpha} - 1)t + n\,C_X(n^{-1/\alpha} t) . \]
Set $t = 1$ and rearrange, to get
\begin{align*}
C_X(n^{-1/\alpha}) &= n^{-1} \big({-\delta}(n^{1 - 1/\alpha} - 1)t + C_X(1)\big) \\
  &= -\delta n^{-1/\alpha} - \big(\delta - C_X(1)\big)n^{-1} \\
  &= -\delta n^{-1/\alpha} - \gamma n^{-1} ,
\end{align*}
where we have written $\gamma = \delta - C_X(1)$. If $t$ is of the form $t = n^{-1/\alpha}$, then $n^{-1} = t^{\alpha}$, and we have
\[ C_X(t) = \delta t - \gamma t^\alpha , \]
for those $t$.

This isn't quite enough -- but it will be sufficient if we can show this holds for all $t = r^{-1/\alpha}$ for rational $r$, since these are dense in $[0,1]$. To do this, we need to check that we also have $C_X(t) = b_r t + r\,C_X(a_r t)$ where $a_r = r^{-1/\alpha}$ and $b_r = \delta(r^{1 - 1/\alpha} - 1)$ for any rational $r$. For any $n, m$ such that $r = n/m$, we have
\[ C_X(t) = b_n t + n\, C_X(a_n t) = b_m t + m\, C_X(a_m t) . \]
Setting $t = \frac{1}{a_m}\,s$, we have
\[ \frac{b_n}{a_m}\,s + n \, C_X \Big( \frac{a_n}{a_m}\,s\Big) = \frac{b_m}{a_m} \, s + m\,C_X(s) , \]
and rearranging gives
\[ C_X(s) = \frac{b_n - b_m}{ma_m}\, s + \frac{n}{m}\,C_X\Big( \frac{a_n}{a_m}\,s\Big) .\]
But
\[ \frac{a_n}{a_m} = \frac{n^{-1/\alpha}}{m^{-1/\alpha}} = r^{-1/\alpha} \qquad
\frac{b_n - b_m}{ma_m} = \delta\,\frac{n^{1-1/\alpha} - m^{1 - 1/\alpha}}{m^{1-1/\alpha}} = \delta(r^{1 - 1/\alpha} - 1) , \]
as required.
\end{proof}

\begin{lemma} \label{lem:fin3}
Suppose that $X$ is discrete stable and that $a_n = n^{-1}$. Then the factorial cumulant generating function of $X$ is $C_X(t) = \delta t - \gamma t \log t$ for some $\delta$ and $\gamma$.
\end{lemma}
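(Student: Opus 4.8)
The plan is to run the proof of Lemma~\ref{lem:fin1} in the case $\alpha = 1$. The only new ingredients are the formula for $b_n$ from Lemma~\ref{lem:bna1} --- which I write as $b_n = -\gamma\log n$, merely renaming the constant --- in place of $b_n = \delta(n^{1-1/\alpha}-1)$, and the identity $\log(t/n) = \log t - \log n$ in place of $(t/n)^\alpha = t^\alpha n^{-1}$. Throughout I use the functional equation $C_X(t) = b_n t + n\,C_X(a_n t)$ for the factorial cumulant generating function (from~\eqref{eq:dstab2} and Lemma~\ref{lem:apgf}), now with $a_n = n^{-1}$.

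First I would pin down $C_X$ on the reciprocals of the positive integers: substituting $a_n = n^{-1}$, $b_n = -\gamma\log n$ and setting $t = 1$ gives $C_X(1) = -\gamma\log n + n\,C_X(n^{-1})$, so, writing $\delta := C_X(1)$ and using $\log(1/n) = -\log n$,
\[ C_X(n^{-1}) = \delta\,\frac1n + \gamma\,\frac{\log n}{n} = \delta\,\frac1n - \gamma\,\frac1n\log\frac1n . \]
Note $\delta = C_X(1) = \log\psi_X(1) = \log\mathbb P(X=0)$ is finite, since $X$ is compound Poisson by Lemma~\ref{lem:class} and hence $\mathbb P(X=0)>0$. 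Thus $C_X(t) = \delta t - \gamma\,t\log t$ for every $t = 1/n$.

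The set $\{1/n:n\ge1\}$ accumulates only at the boundary point $0$, so this is not yet enough; the key step is to extend the identity to a dense subset of $[0,2]$. Applying the functional equation once more, for positive integers $n,k$ with $n\le 2k$ (so that $t=n/k\in[0,2]$, where the APGF is defined),
\[ C_X\!\Big(\frac nk\Big) = -\gamma\log n\cdot\frac nk + n\,C_X\!\Big(\frac1k\Big) = \delta\,\frac nk + \gamma\,\frac nk\big(\log k-\log n\big) = \delta\,\frac nk - \gamma\,\frac nk\log\frac nk , \]
using the value of $C_X(1/k)$ found above. Every rational in $(0,2]$ is of this form, so $C_X(t)=\delta t-\gamma\,t\log t$ on $\mathbb Q\cap(0,2]$. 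Since $X$ is compound Poisson, $\psi_X$ is the exponential of a real-valued function on $[0,2]$, hence positive, so $C_X=\log\psi_X$ is continuous on $[0,2]$; as $\mathbb Q\cap(0,2]$ is dense and $t\mapsto\delta t-\gamma\,t\log t$ is continuous (with $0\log 0=0$, matching $C_X(0)=\log 1=0$), the identity holds on all of $[0,2]$. This gives the asserted form for $C_X$; the accompanying restrictions on $\delta$ and $\gamma$ are then established separately.

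I do not expect a genuine obstacle: the $\alpha=1$ case is, if anything, cleaner than $\alpha\ne1$, because $b_n=-\gamma\log n$ interacts so simply with $C_X(t/n)$. The one point that deserves care --- and is easy to overlook --- is the passage from the countable set $\{1/n\}$ (which accumulates only at the boundary) to a dense subset of the whole interval; that is exactly what the $t=n/k$ substitution, together with continuity of $C_X=\log\psi_X$, provides.
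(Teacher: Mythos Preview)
Your proof is correct and follows essentially the same approach as the paper: use the functional equation $C_X(t)=b_n t+n\,C_X(t/n)$ with $b_n=-\gamma\log n$, evaluate at $t=1$ to determine $C_X$ on $\{1/n\}$, extend to a dense set of rationals, and conclude by continuity. Your extension step---substituting $t=n/k$ directly and using the already-computed value $C_X(1/k)$---is a slight reorganisation of the paper's route (which first extends the functional equation itself to rational indices $r=n/m$ before re-evaluating at $t=1$), but the content is the same; if anything your version is a touch more direct, and you are more explicit than the paper about why $C_X=\log\psi_X$ is finite and continuous.
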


\begin{proof}
We have from Lemma \ref{lem:bna1} that $b_n = -\gamma \log n$. So
\[ C_X(t) = -\gamma (\log n)t + n\,C_X(n^{-1} t) . \]
Set $t = 1$ and rearrange, to get
\begin{align*}
C_X(n^{-1}) &= n^{-1} \big({-\gamma} \log n + C_X(1)\big) \\
  &= n^{-1}C_X(1) + \gamma n^{-1} \log n  \\
  &= -\delta n^{-1} + \gamma n^{-1}\log n ,
\end{align*}
where we have written $\delta = -C_X(1)$. If $t$ is of the form $t = n^{-1}$, then $n^{-1} = t$ and $\log n = -\log t$, so we have
\[ C_X(t) = -\delta t - \gamma t \log t , \]
for those $t$.

This isn't quite enough -- but it will be sufficient if we can show this holds for all $t = r^{-1}$ for rational $r$, since these are dense in $[0,1]$. To do this, we need to check that we also have $C_X(t) = b_r t + r\,C_X(a_r t)$ where $a_r = r^{-1}$ and $b_r = -\gamma \log r$ for any positive rational $r$. For any $n, m$ such that $r = n/m$, we have
\[ C_X(t) = b_n t + n\, C_X(a_n t) = b_m t + m\, C_X(a_m t) . \]
As before, setting $t = \frac{1}{a_m}\,s$ gives
\[ C_X(s) = \frac{b_n - b_m}{ma_m}\, s + \frac{n}{m}\,C_X\Big( \frac{a_n}{a_m}\,s\Big) .\]
But
\[ \frac{a_n}{a_m} = \frac{n^{-1}}{m^{-1}} = r^{-1} \qquad
\frac{b_n - b_m}{ma_m} = \frac{{-\gamma}(\log n - \log m)}{1} = -\gamma \log r , \]
as required.
\end{proof}

We now need to check the conditions of $\delta$ and $\gamma$ from \eqref{eq:maineq} are required. We already know $(a_n)$ is decreasing (from the proof of Lemma \ref{lem:anform}), so $\alpha > 0$.

\begin{lemma} \label{lem:fin2}
Suppose that $X$ is a compound Poisson distribution with APGF $\psi_X(t) = \exp(-\delta t - \gamma t^\alpha)$ for $\alpha \neq 0, 1$ and $\gamma \neq 0$.
\begin{enumerate}
\item If $\alpha \in (0, 1)$, $\gamma \geq 0$ and $\delta \geq -\alpha\gamma$.
\item If $\alpha > 1$, then $\alpha \leq 2$, $\gamma \leq 0$ and $\delta \geq - \alpha \gamma$.
\end{enumerate}
\end{lemma}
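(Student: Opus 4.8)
The plan is to use the hypothesis that $X$ is compound Poisson on the non-negative integers. By the Feller/L\'evy characterisation already invoked in the proof of Lemma~\ref{lem:class}, this is equivalent to saying that $\log G_X(s) = \sum_{k\geq 1}\mu_k(s^k-1)$ for some non-negative $\mu_k$ with $\sum_k\mu_k < \infty$. Putting $s = 1-t$, this says precisely that, when the factorial cumulant generating function $C_X(t) = \log\psi_X(t)$ is expanded as a power series in $u = 1-t$ about $u=0$, the coefficient of $u^0$ is $-\sum_k\mu_k\leq 0$ while the coefficient of $u^k$ equals $\mu_k\geq 0$ for every $k\geq 1$. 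Both $C_X(1-u)$ (which equals $\log G_X(s)$ near $s=0$, where $G_X(0)=\ee^{-\lambda}>0$ for a compound Poisson law) and $-\delta(1-u)-\gamma(1-u)^\alpha$ are genuine convergent power series in $u$ near $0$, so by uniqueness of power series their coefficients coincide; this is the device that lets us read the parameter constraints off the coefficients.

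First I would compute those coefficients. Substituting $t = 1-u$ into $C_X(t) = -\delta t - \gamma t^\alpha$ and using the binomial series $(1-u)^\alpha = \sum_{k\geq 0}\binom{\alpha}{k}(-u)^k$ (convergent near $u=0$) gives
\[ C_X(1-u) = -(\delta+\gamma) + (\delta+\alpha\gamma)\,u \;-\; \gamma\sum_{k\geq 2}(-1)^k\binom{\alpha}{k}\,u^k . \]
Hence the coefficient of $u$ is $\delta+\alpha\gamma$, which must be $\geq 0$; this yields $\delta\geq-\alpha\gamma$ in both cases at once. For $k\geq 2$ the coefficient of $u^k$ is $-\gamma(-1)^k\binom{\alpha}{k}$, so the requirement becomes $\gamma(-1)^k\binom{\alpha}{k}\leq 0$ for every $k\geq 2$.

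It then remains to track the sign of $(-1)^k\binom{\alpha}{k} = (-1)^k\alpha(\alpha-1)\cdots(\alpha-k+1)/k!$. If $\alpha\in(0,1)$, then for each $k\geq 2$ the product $\alpha(\alpha-1)\cdots(\alpha-k+1)$ has one positive and $k-1$ negative factors, so $(-1)^k\binom{\alpha}{k}<0$ and the constraint forces $\gamma\geq 0$: this is part~1 (and then the constant-term condition $\delta+\gamma\geq 0$ is automatic, since $\delta+\gamma\geq(1-\alpha)\gamma\geq 0$). If $\alpha>1$, take $k=2$: $\binom{\alpha}{2}=\alpha(\alpha-1)/2>0$, so $\gamma\binom{\alpha}{2}\leq 0$ forces $\gamma\leq 0$. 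To see $\alpha\leq 2$, take $k=3$: if $\alpha>2$ then $\alpha,\alpha-1,\alpha-2$ are all positive, so $\binom{\alpha}{3}>0$, and the coefficient of $u^3$ is $-\gamma(-1)^3\binom{\alpha}{3}=\gamma\binom{\alpha}{3}$, whose non-negativity forces $\gamma\geq 0$; combined with $\gamma\leq 0$ this gives $\gamma=0$, contradicting $\gamma\neq 0$. Hence $\alpha\leq 2$, and with $\gamma\leq 0$ and $\delta\geq-\alpha\gamma$ we obtain part~2 (again $\delta+\gamma\geq(1-\alpha)\gamma\geq 0$ automatically).

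The only non-mechanical point — and hence what I would treat as the main obstacle — is the setup in the first paragraph: being careful that the compound-Poisson / non-negative-L\'evy-coefficient fact is applied to the expansion of $C_X$ about $t=1$ (matched against the power-series expansion of $\log G_X$ about $s=0$), and that the two analytic expressions for $C_X$ genuinely share the same Taylor coefficients there. Once that is pinned down, the rest is elementary sign bookkeeping, and the one mildly clever observation is that a single index, $k=3$, already eliminates every $\alpha>2$.
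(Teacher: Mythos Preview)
Your proof is correct and is essentially the same as the paper's: both extract the constraints from the non-negativity of the compound-Poisson/L\'evy coefficients, using $k=1$ for $\delta\geq-\alpha\gamma$, $k=2$ (equivalently the paper's $\mathbb{P}(Y=1)\leq 1$) for the sign of $\gamma$, and $k=3$ to rule out $\alpha>2$. The only cosmetic difference is that the paper normalises by $\lambda=\delta+\gamma$ and speaks of $\mathbb{P}(Y=k)$, whereas you work directly with the unnormalised coefficients $\mu_k$.
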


\begin{proof}
Our life is made easier by using the compound Poisson form;  generically, this means
\[ \psi_Y(t) = 1 + \frac{1}{\lambda} \log \psi_X(t) . \]
The distribution $Y$ that gets compounded can be put in `canonical form' where $\mathbb P(Y = 0) = \psi_Y(1) = 0$, meaning that $\lambda = -\log \psi_X(1)$.

In our case, $Y$ has APGF 
\[ \psi_Y(t) = 1 + \frac{1}{\delta + \gamma} (-\delta t - \gamma t^\alpha) , \]
where $\delta + \gamma \geq 0$.

We note that
\[ \mathbb P(Y = 1) = -\psi_Y'(1) = -\frac{-\delta - \alpha\gamma}{\delta + \gamma} = \frac{\delta + \alpha\gamma}{\delta + \gamma}. \]
To get $\phi'_Y(1) \geq 0$, we need $\delta + \alpha\gamma \geq 0$, so $\delta \geq -\alpha\gamma$.
To get $\psi_Y'(1) \leq 1$, we need $\delta + \alpha\gamma \leq \delta + \gamma$, so $\alpha \gamma \leq \gamma$. For $\alpha < 1$, this requires $\gamma \geq 0$; for $\alpha > 1$, this requires $\gamma \leq 0$.

Similarly,
\[ \mathbb P(Y = 3) = -\psi_Y'''(1) = -\frac{-\gamma}{\delta + \gamma}\, \alpha(\alpha - 1)(\alpha - 2) . \]
For $\alpha > 1$, where $\gamma \leq 0$, having $-\psi_Y'''(1) \geq 0$ means $\alpha (\alpha - 1) (\alpha - 2)$ must be negative, which means we must have $\alpha - 2 \leq 0$ or $\alpha < 2$.
\end{proof}

\begin{lemma} \label{lem:fin4}
Suppose that $X$ is a compound Poisson distribution with APGF $\psi_X(t) = \exp(-\delta t - \gamma t\log t)$.
Then $\gamma \leq 0$ and $\delta \geq -\gamma$.
\end{lemma}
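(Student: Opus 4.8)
The plan is to mirror the proof of Lemma \ref{lem:fin2}, exploiting the compound Poisson structure of $X$. Write $X$ in canonical compound Poisson form $\psi_X(t) = \exp\big(\lambda(\psi_Y(t) - 1)\big)$, with the compounded variable $Y$ normalised so that $\mathbb P(Y=0) = \psi_Y(1) = 0$; then $\lambda = -\log\psi_X(1)$. Since $\log\psi_X(t) = -\delta t - \gamma t\log t$ and $1\cdot\log 1 = 0$, this gives $\lambda = \delta$. Note that $\delta \geq 0$ is forced, because $\psi_X(1) = \mathbb P(X=0) = \ee^{-\delta} \leq 1$; and if $\delta = 0$ then $\psi_X \equiv 1$, which forces $\gamma = 0$, so that the claimed inequalities $\gamma \leq 0$ and $\delta \geq -\gamma$ hold trivially. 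So assume $\delta > 0$, and set $\psi_Y(t) = 1 + \frac{1}{\delta}\big(-\delta t - \gamma t\log t\big) = 1 - t - \frac{\gamma}{\delta}\, t\log t$.

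Next I would read off $\mathbb P(Y=1)$. Since $\psi_Y(t) = G_Y(1-t)$, we have $\mathbb P(Y=1) = -\psi_Y'(1)$; differentiating $\psi_Y$ using $\tfrac{\dd}{\dd t}(t\log t) = \log t + 1$ and evaluating at $t=1$ gives $\psi_Y'(1) = -1 - \frac{\gamma}{\delta}$, hence $\mathbb P(Y=1) = 1 + \frac{\gamma}{\delta}$. (I will write the derivative calculation without the nonexistent macro just used: $\psi_Y'(t) = -1 - \frac{\gamma}{\delta}(\log t + 1)$.) The requirement that this be a probability, namely $0 \leq 1 + \frac{\gamma}{\delta} \leq 1$, yields both conclusions simultaneously: the upper bound forces $\frac{\gamma}{\delta} \leq 0$, so $\gamma \leq 0$ since $\delta > 0$; and the lower bound forces $\frac{\gamma}{\delta} \geq -1$, so $\delta \geq -\gamma$.

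I expect no genuine obstacle here. The only points needing a little care are the degenerate case $\delta = 0$ and the bookkeeping of the canonical compound Poisson normalisation (handled exactly as in the proof of Lemma \ref{lem:fin2}). Unlike the $\alpha \neq 1$ case, there is no stability parameter left to constrain, so the single probability $\mathbb P(Y=1)$ suffices and there is no need to compute $\mathbb P(Y=3)$ or any higher-order quantity.
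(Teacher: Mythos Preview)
Your proposal is correct and follows essentially the same route as the paper: put $X$ in canonical compound Poisson form with $\lambda=\delta$, write $\psi_Y(t)=1+\tfrac{1}{\delta}(-\delta t-\gamma t\log t)$, and read off both inequalities from $0\le \mathbb P(Y=1)=-\psi_Y'(1)=\tfrac{\delta+\gamma}{\delta}\le 1$. You are slightly more careful than the paper in explicitly disposing of the degenerate case $\delta=0$.
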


\begin{proof}
Again, we use the compound Poisson with $Y$ in canonical form, where
\[ \psi_Y(t) = 1 + \frac{1}{\delta} (-\delta t - \gamma t \log t)  ,\]
where $\delta \geq 0$.
Here,
\[ \mathbb P(Y = 1) = -\psi'_Y(1) = -\frac{-\delta - \gamma}{\delta} = \frac{\delta+\gamma}{\delta}. \]
Then $-\psi'_Y(1) \geq 0$ requires $\delta + \gamma \geq 0$, or $\delta \geq -\gamma$, and $-\psi'_Y(1) \leq 1$ requires $\delta + \gamma \leq \delta$, or $\gamma \leq 0$.
\end{proof}

\section*{Acknowledgements}

The author thanks Nadjib Bouzar, Sam Power, and Will Townes for useful discussions and information.

\bibliographystyle{abbrvurl}
\bibliography{bibliography}

\bigskip

\noindent University of Leeds, Leeds, UK

\noindent \texttt{m.aldridge@leeds.ac.uk}

\end{document}